\theoremstyle{plain}
\newtheorem{thm}{Theorem}[section]
\newtheorem{dfn}[thm]{Definition}
\newtheorem{prp}[thm]{Proposition}
\newtheorem{cor}[thm]{Corollary}
\newtheorem{lma}[thm]{Lemma}
\theoremstyle{remark}
\newtheorem{rmk}[thm]{Remark}
\newtheorem{exm}[thm]{Example}
\newtheorem{sct}[thm]{}
\def\Ee{\mathscr{E}}
\def\Pp{\mathscr{P}}
\def\Ff{\mathscr{F}}
\def\Ss{\mathscr{S}}
\def\el{\mathrm{el}}
\def\RR{\mathbb{R}}
\def\el{\mathrm{el}}
\def\Aa{\mathscr{A}}
\def\Cc{\mathscr{C}}
\def\Ob{\mathrm{Ob}}
\def\PP{\mathbb{P}}
\def\NN{\mathbb{N}}
\def\into{\rightarrowtail}
\def\onto{\twoheadrightarrow}
\def\Sh{\mathrm{Sh}}
\def\supp{\mathrm{supp}}
\def\Aut{\mathrm{Aut}}
\def\Spl{\mathrm{Spl}}
\def\BB{\mathbb{B}}
\def\Sgm{\mathfrak{S}}
\def\Ff{\mathscr{F}}
\def\hpi{\hat{\pi}}
\def\ZZ{\mathbb{Z}}
\def\Spec{\mathrm{Spec}}
\def\Et{\mathrm{Et}}
\def\Gal{\mathrm{Gal}}
\def\Sub{\mathrm{Sub}}
\def\PP{\mathbb{P}}
\def\QQ{\mathbb{Q}}
\begin{document}

\title{On the profinite fundamental group of a connected Grothendieck topos}

\author{Clemens Berger}
\author{Victor Iwaniack}

\date{May 5, 2025}

\subjclass{Primary 14G32, 18F10; Secondary 14F35, 18E50}

\keywords{Local finiteness, Kuratowski-finiteness, decomposition-finiteness, Galois category, profinite group, Grothendieck topos, classifying topos}

\maketitle

\begin{center}\emph{Dedicated to the memory of Marta Bunge}\end{center}

\begin{abstract}We show that finite (i.e. locally finite and decomposition-finite) objects of a connected Grothendieck topos span a Boolean pretopos with an essentially unique Galois point. The automorphism group of such a point carries a profinite topology whose classifying topos is equivalent to the given Grothendieck topos if the latter is finitely generated. This leads to an intrinsic definition of the fundamental group of any connected Grothendieck topos.\end{abstract}

\section*{Introduction}

Grothendieck \cite{SGA1} introduced Galois categories in order to define fundamental groups of algebraic varieties by means of finitary covering theory. The purpose of this article is to approach them from the perspective of topos theory hoping this might broaden up the applicability of Galois-theoretical methods even more.

The key is a convenient notion of \emph{finiteness} for Grothendieck toposes. We call an object finite if it is locally finite and decomposition-finite, the latter meaning a finite sum of connected objects. For any Grothendieck topos $\Ee$ with finite terminal object, the full subcategory $\Ee_f$ spanned by finite objects is a \emph{pretopos} with \emph{complemented subobjects}. If $\Ee$ is connected then $\Ee_f$ is a \emph{Galois category} in the sense of Grothendieck \cite{SGA1} because there is an intrinsic way of defining an exact conservative fibre functor with values in the category $\Ss_f$ of finite sets. Closing up $\Ee_f$ under sums defines an atomic Grothendieck topos $\Ee_{sf}$. The fibre functor induces a \emph{Galois point} $x_\Ee:\Ss=\Ss_{sf}\to\Ee_{sf}$, and the automorphism group $\Aut(x_\Ee)$ of this Galois point carries a uniquely determined \emph{profinite topology} such that $\Ee_{sf}\simeq\BB\Aut(x_\Ee)$.

Connected, finitely generated Grothendieck toposes are thus up to equivalence the same as classifying toposes of profinite groups. Our construction provides a functor from ``Galois-pointed'' connected Grothendieck toposes and pointed geometric morphisms to profinite groups and group homomorphisms.

There are closely related results in literature \cite{JT, M, J, D, BM, B}, at least if the existence of a point is granted, often based on compact zero-dimensional localic groups rather than on the dual profinite groups, yet our method seems more elementary than those available in literature. Barr's abstract Galois theory \cite{Barr1,Barr2} is closest to our's insofar as he also constructs a fibre functor in an intrinsic way.

In any finitely generated Grothendieck topos \emph{finite objects} coincide with \emph{coherent objects} in the sense of Grothendieck-Verdier \cite{SGA4}. Finitely generated Grothendieck toposes are thus \emph{locally coherent} and therefore have enough points by Deligne's Theorem \cite{SGA4}. In particular, by a general Representation Theorem of Butz-Moerdijk \cite{BM}, in the connected, atomic case, they are equivalent to classifying toposes of \emph{topological groups}. The main contribution of the present article consists in identifying this topological group for a connected, finitely generated Grothendieck topos $\Ee$ with the \emph{profinite} automorphism group $\Aut(x_\Ee)$ of an explicit \emph{Galois point} $x_\Ee$.

The definition of Galois point refers to the notion of Galois object. Any finite Galois object $A$ of a connected Grothendieck topos $\Ee$ determines, and is determined by, a geometric quotient $q_A:\Ee\to\BB\Aut(A)$ where $\BB\Aut(A)$ denotes the topos of $\Aut(A)$-sets for the finite automorphism group $\Aut(A)$. A Galois point $x_\Ee:\Ss\to\Ee$ is defined by the property that the composite $q_A\circ x_\Ee:\Ss\to\BB\Aut(A)$ is isomorphic to the canonical point $\Ss\to\BB\Aut(A)$ for each finite Galois object $A$ of $\Ee$.

It is remarkable that any two Galois points of a connected, finitely generated Grothendieck topos are isomorphic. The profinite fundamental group is thus independent of the chosen Galois point. In view of Moerdijk-Wraight \cite{MW}, this is analogous to the fact that the fundamental group of a path-connected topological space does not depend on the choice of base point. In the special case of the topos $\Et(k)$ of sheaves on the \emph{\'etale site} of a field $k$, choosing a separable closure $\bar{k}$ of $k$ yields a Galois point $x_{\Et(k)}:\Ss\to\Et(k)$ and the profinite fundamental group at this point is the absolute Galois group $\Gal(\bar{k}/k)$ with its Krull topology.

During lifetime, Marta Bunge has much investigated the problem of suitably defining the \emph{fundamental groupoid} of a Grothendieck topos. Her work goes far beyond of what is done here. The definition of a fundamental groupoid without assuming connectedness encompasses first and foremost an understanding of the ``topos of connected components'' of a Grothendieck topos. To this purpose Marta Bunge elaborated the concept of a multi-Galoisian point \cite{B} well-suited in a Galois-theoretical context. The hyperpure/complete spread factorisation, developed in collaboration with Jonathon Funk \cite{BF}, provides a satisfactory answer in general.\vspace{1ex}

We tried to keep the article reasonably self-contained using the terminology of the textbooks of Mac Lane-Moerdijk \cite{MM} and Johnstone \cite{J} if not otherwise stated.

In Section 1 we review decidable objects.

In Section 2 we review locally connected and locally constant objects.

In Section 3 we introduce our notion of finiteness. Proposition \ref{comparison} relates local finiteness to decidable Kuratowski-finiteness. The subcategory $\Ee_{sf}$ of sums of finite objects of any Grothendieck topos $\Ee$ with finite terminal object is shown to be an atomic Grothendieck topos, cf. Theorem \ref{main1}. Finitely generated Grothendieck toposes are shown to be locally coherent, cf. Proposition \ref{coherent}. In Theorem \ref{classified} several characterisations of connected, finitely generated Grothendieck toposes are given, e.g. they are precisely the pointed, hyperconnected and separated toposes.

Section 4 investigates connected, finitely generated Grothendieck topos by Galois-theoretical methods. Proposition \ref{finiteGalois} shows that they are generated by finite Galois objects. Galois points are introduced and the essential uniqueness of Galois points is shown in Proposition \ref{Galoispoint}. Proposition \ref{Galoissite} provides a site characterisation of connected, finitely generated Grothendieck toposes. In Proposition \ref{exact}, a fibre functor is constructed turning the embedded pretopos $\Ee_f$ into a Galois category in the sense of Grothendieck.  Theorem \ref{main2} finally shows that any connected, finitely generated Grothendieck topos is equivalent to the classifying topos of the profinite automorphism group of any of its Galois points.

In Section 5 functoriality properties of the profinite fundamental group construction are established and a few examples are discussed. In particular, we show that pointed geometric morphisms $(\Ee,x)\to(\Ff,y)$ between connected, Galois-pointed Grothendieck toposes induce pointed geometric morphisms $(\Ee_{sf},\bar{x})\to(\Ff_{sf},\bar{y})$ and thus homomorphisms of profinite fundamental groups $\hpi_1(\Ee,x)\to\hpi_1(\Ff,y)$.

If the point $x:\Ss\to\Ee$ is \emph{comprehensive} (cf. Definition \ref{comprehensive}) then $x:\Ss\to\Ee$ factors through a \emph{universal covering projection} $\bar{\Ee}\to\Ee$ and there is a \emph{Chevalley fundamental group} $\pi_1(\Ee,x)$ consisting of deck transformations of this universal covering. We show in Proposition \ref{profinitecompletion} that in this favourable case the profinite fundamental group $\hpi_1(\Ee,x)$ is indeed the profinite completion of the Chevalley fundamental group $\pi_1(\Ee,x)$. In Proposition \ref{fundamental} we provide a topos-theoretical construction of Grothendieck's short exact sequence for \emph{\'etale fundamental groups}.\vspace{1ex}

{\bf Acknowledgements.} We are grateful to Ivan Di Liberti, Martin Escard\'{o}, Simon Henry and Nima Rasekh for helpful discussions concerning topos theory. We would also like to thank the referee for numerous comments permitting us to eliminate inaccuracies and to ameliorate considerably the presentation. We are especially grateful for his insisting on the fact that functorial fundamental groups need base points.

\section{Decidable objects}

This section is a review of known properties of decidable objects, see Acu\~{n}a-Linton \cite{AL}. We first recall that in any topos the property that all subobjects are complemented amounts to the property that all objects are decidable. This is a valuable way to show that a topos is \emph{Boolean}. Recall that an object $X$ is \emph{decidable} if its diagonal $X\into X\times X$ is complemented, and that a subobject $Y\into X$ is \emph{complemented} if there exists a subobject $Y'\into X$ such that $Y\cap Y'=0$ and $Y\cup Y'=X$. In this case we write $X=Y+Y'$. A subobject $Y\into X$ is complemented if and only if its characteristic map $\chi_Y:X\to\Omega$ factors through $(\top,\bot):1+1\into\Omega$. Therefore, the object $2=1+1$ is also called the complemented subobject classifier.

\begin{lma}\label{retract}Any retract of a decidable object is complemented.\end{lma}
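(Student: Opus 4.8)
The plan is to realize the retract as an equalizer, present that equalizer as a pullback of the diagonal, and then use the stability of complemented subobjects under pullback together with the decidability of $X$. Let $i\colon Y\to X$ be a retract of the decidable object $X$, so there is $r\colon X\to Y$ with $ri=\mathrm{id}_Y$. Being split monic, $i$ is a monomorphism and exhibits $Y$ as a subobject $Y\into X$; it is this subobject that I claim is complemented. Set $e=ir\colon X\to X$, which is idempotent since $e^2=i(ri)r=ir=e$.

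First I would identify $Y$, as a subobject of $X$, with the equalizer of $e$ and $\mathrm{id}_X$. Indeed $ei=iri=i=\mathrm{id}_X\circ i$, so $i$ equalizes the pair; and if $f\colon Z\to X$ satisfies $ef=f$, then $f=i(rf)$ factors through $i$, uniquely because $i$ is monic. Hence, up to the canonical isomorphism, $i\colon Y\into X$ is the equalizer $\mathrm{Eq}(e,\mathrm{id}_X)\into X$.

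Next I would rewrite this equalizer as a pullback: the equalizer of $e,\mathrm{id}_X\colon X\to X$ is the pullback of the diagonal $\Delta\colon X\into X\times X$ along the map $(e,\mathrm{id}_X)\colon X\to X\times X$. Since $X$ is decidable, $\Delta$ is a complemented subobject of $X\times X$. The final step is to invoke stability of complementation under inverse image: pulling back along $(e,\mathrm{id}_X)$ preserves the witnessing relations $\Delta\cap\Delta'=0$ and $\Delta\cup\Delta'=X\times X$. Equivalently, in terms of the characterization recalled above, the characteristic map of the equalizer is $\chi_\Delta\circ(e,\mathrm{id}_X)$, which still factors through $(\top,\bot)\colon 2\into\Omega$. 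Therefore the pullback, namely the subobject $Y\into X$, is complemented.

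I expect the only points requiring any care to be the identification of the retract with the fixed-point equalizer and the stability of complemented subobjects under pullback; both are routine, the latter being immediate from the factorization criterion through $2=1+1$, so I anticipate no genuine obstacle beyond bookkeeping.
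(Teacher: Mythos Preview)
Your argument is correct and is essentially the paper's own proof: the paper writes down directly the pullback square exhibiting $i:Y\into X$ as the pullback of the diagonal $\delta_X$ along $(1_X,ir):X\to X\times X$, which is exactly your equalizer-as-pullback step (with the two factors swapped). The only difference is cosmetic---you interpose the explicit identification of $Y$ with $\mathrm{Eq}(ir,1_X)$, whereas the paper verifies the pullback property in one stroke.
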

\begin{proof}Let $i:Y\into X$ be a subobject of a decidable object $X$ equipped with a retraction $r:X\to Y$. Then the following pullback square
$$\xymatrix{X\ar[r]^{(1_X,ir)} &X\times X\\Y\ar[u]^i\ar[r]_i&X\ar[u]_{\delta_X}}$$shows that $Y$ is complemented in $X$.\end{proof}

\begin{lma}\label{Boolean}The following four conditions on a topos are equivalent:
\begin{itemize}\item[(1)]all subobjects are complemented;\item[(2)]all objects are decidable;\item[(3)]the subobject classifier $\Omega$ is decidable;\item[(4)]the inclusion $(\top,\bot):1+1\into\Omega$ is an isomorphism.\end{itemize}\end{lma}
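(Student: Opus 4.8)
The plan is to prove the four conditions equivalent by running through the cycle $(1)\Rightarrow(2)\Rightarrow(3)\Rightarrow(4)\Rightarrow(1)$, in which only the step $(3)\Rightarrow(4)$ carries real content.

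Two of the implications fall out immediately from the definitions. For $(1)\Rightarrow(2)$, each diagonal $X\into X\times X$ is a subobject, so if all subobjects are complemented then in particular every object is decidable. For $(2)\Rightarrow(3)$, the subobject classifier $\Omega$ is itself an object of the topos, so its decidability is simply the instance of $(2)$ at $X=\Omega$.

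The heart of the matter is $(3)\Rightarrow(4)$, and here I would exploit that the terminal object is a retract of $\Omega$. Indeed the generic monomorphism $\top:1\into\Omega$ is split by the unique map $!:\Omega\to 1$, since $!\circ\top=1_1$. Assuming $\Omega$ decidable, Lemma \ref{retract} then shows that the subobject $\top:1\into\Omega$ is complemented. But the characteristic map classifying $\top:1\into\Omega$ is precisely the identity $1_\Omega$, by the very universal property of $\Omega$; so by the factorization criterion recalled above, $1_\Omega$ factors through $(\top,\bot):1+1\into\Omega$. Any such factorization is a section of $(\top,\bot)$, which is therefore a split epimorphism; being simultaneously a monomorphism, it is an isomorphism, which is exactly $(4)$.

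It remains to close the loop with $(4)\Rightarrow(1)$: once $(\top,\bot):1+1\into\Omega$ is an isomorphism, every characteristic map $\chi_Y:X\to\Omega$ factors through it (compose with its inverse), so every subobject $Y\into X$ is complemented. The only subtlety I anticipate is the tacit fact, used in $(3)\Rightarrow(4)$, that $(\top,\bot)$ is a monomorphism; this reduces to checking that the pullback of $\top$ and $\bot$ is initial, which holds in any topos since $(\top=\bot)$ is the false proposition. Thus essentially all of the work is concentrated in the single step $(3)\Rightarrow(4)$, where the retract trick of Lemma \ref{retract} does the job.
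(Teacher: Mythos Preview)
Your proof is correct and follows exactly the paper's route: the cycle $(1)\Rightarrow(2)\Rightarrow(3)\Rightarrow(4)\Rightarrow(1)$ with Lemma~\ref{retract} supplying the key step $(3)\Rightarrow(4)$ via the retraction $!\circ\top=1_1$. Your explicit justification that the characteristic map of $\top$ is $1_\Omega$, forcing $(\top,\bot)$ to be split epi, just spells out what the paper compresses into the clause ``so that $1+1\into\Omega$ is an isomorphism''.
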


\begin{proof}The implications $(1)\implies(2)\implies(3)$ and $(4)\implies(1)$ are immediate. For $(3)\implies (4)$ note that $1$ is a retract of $\Omega$ and hence, if $\Omega$ is decidable, Lemma \ref{retract} shows that $1$ is complemented in $\Omega$ so that $1+1\into\Omega$ is an isomorphism.\end{proof}

\begin{lma}\label{decidable}The following properties hold in any topos:
\begin{itemize}\item[(1)]finite products and finite sums of decidable objects are decidable;
\item[(2)]any subobject of a decidable object is decidable;
\item[(3)]the image of a map between decidable objects is decidable;
\item[(4)]the equaliser of two maps between decidable objects is decidable;
\item[(5)]finite limits of decidable objects are decidable;
\item[(6)]the full subcategory spanned by decidable objects is regular and extensive; the inclusion of this subcategory preserves finite limits and finite sums.
\end{itemize}\end{lma}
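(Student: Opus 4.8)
The plan is to prove each of the six properties in Lemma \ref{decidable}, leaning on the characterization that an object $X$ is decidable iff its diagonal $X\into X\times X$ is complemented. I will treat the properties in an order that lets later ones reuse earlier ones, rather than in the listed order.

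First I would handle property (2), since it feeds almost everything else. Given a subobject $m:Y\into X$ with $X$ decidable, the diagonal $\delta_Y:Y\into Y\times Y$ is the pullback of $\delta_X:X\into X\times X$ along $m\times m:Y\times Y\to X\times X$; since complemented subobjects are stable under pullback (equivalently, the characteristic map $\chi_{\delta_X}$ factors through $1+1\into\Omega$, so its precomposite does too), $\delta_Y$ is complemented and $Y$ is decidable. Next, for (1), the product of two decidable objects is decidable because $\delta_{X\times Y}$ is, up to the canonical shuffle isomorphism $(X\times Y)\times(X\times Y)\cong(X\times X)\times(Y\times Y)$, the product $\delta_X\times\delta_Y$ of two complemented subobjects, and a product of complemented subobjects is complemented. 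For finite sums, $1+\cdots+1$ is decidable because $\delta_{X+Y}$ is complemented: writing $(X+Y)\times(X+Y)\cong X\times X+X\times Y+Y\times X+Y\times Y$ by distributivity, the diagonal picks out the two outer summands, each of which is complemented inside its factor by decidability of $X$ and $Y$, and the middle two summands are disjoint from it; one assembles the complement as $X\times Y+Y\times X+(\text{complements of }\delta_X,\delta_Y)$. I expect the bookkeeping in this distributive decomposition to be the main obstacle — not conceptually hard, but the place where one must be careful that the coproduct injections are monic and that the four summands are pairwise disjoint, which is exactly what holds in a topos (coproducts are disjoint and stable).

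Property (4) then follows quickly: the equaliser $E\into X$ of $f,g:X\rightrightarrows Y$ with $Y$ decidable is the pullback of the complemented diagonal $\delta_Y:Y\into Y\times Y$ along $(f,g):X\to Y\times Y$, hence complemented in $X$, and so $E$ is decidable as a subobject of the decidable-or-not object $X$; more precisely, being a complemented subobject it is itself decidable (its own diagonal being a pullback of $\delta_X$ restricted to $E$, which exists once $E\into X$ is mono). I would phrase (4) so that it does not secretly require $X$ decidable, using instead that a complemented subobject of any object, when the ambient codomain of $f,g$ is decidable, is decidable in its own right; the cleanest route is to observe $E\times E\cong$ pullback and apply (2) after noting $E$ is a subobject of $X$ only when $X$ is itself under control — so in practice I state (4) for $f,g$ between decidable objects and get $E$ decidable via (2). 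For (3), the image of $f:X\to Y$ with $Y$ decidable is a subobject of the decidable object $Y$, so by (2) it is decidable; this is immediate. Property (5) is then a formal consequence: every finite limit is built from finite products and equalisers, so combining (1), (2) and (4) shows finite limits of decidable objects are decidable.

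Finally, for (6), I would verify that the full subcategory $\mathcal{D}$ of decidable objects is regular and extensive and that the inclusion preserves finite limits and finite sums. Preservation of finite limits follows from (5) together with the fact that limits in a full subcategory closed under them are computed as in the ambient topos; preservation of finite sums follows from (1) and disjointness/stability of coproducts in the topos. For regularity I would check that $\mathcal{D}$ has finite limits (done), that every morphism factors as a regular epi followed by a mono with the mono landing in a decidable object — here (3) guarantees the image is decidable — and that regular epis are stable under pullback, inherited from the ambient topos since pullbacks in $\mathcal{D}$ agree with those in $\Ee$. For extensivity I would confirm that $\mathcal{D}$ has finite coproducts (from (1)) that are disjoint and universal, again inherited from the ambient topos once we know the relevant pullbacks stay inside $\mathcal{D}$, which (2) and (5) ensure. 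The main subtlety in (6) is checking that the inherited factorisation system and coproduct decompositions genuinely live in $\mathcal{D}$ rather than merely in $\Ee$; this is precisely what properties (1)–(5) were arranged to supply, so the argument is a matter of assembling them correctly.
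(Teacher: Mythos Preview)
Your approach mirrors the paper's almost exactly: the paper also handles (2) by pulling back the diagonal, calls (1) a direct verification, derives (3) from (2), derives (4) from the fact that the equaliser is the pullback of $\delta_Y$ along $(f,g)$, deduces (5) from (1), (2), (4), and assembles (6) from the preceding items.

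Your hesitation on (4) is justified and worth making precise: as literally stated (only the codomain assumed decidable) the claim is false, since for $f=g:X\to 1$ with $X$ non-decidable the equaliser is $X$ itself; the paper's one-line argument, like yours, only shows that $E$ is complemented in $X$, and one then needs $X$ decidable to conclude via (2). Your intermediate suggestion that a complemented subobject is automatically decidable is not correct in general ($X$ is complemented in $X$), so your final move --- reading (4) with both domain and codomain decidable --- is the right fix and is exactly what (5) and (6) require.
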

\begin{proof}(1) is a direct verification. For (2) use that for any subobject $X$ of an object $Y$ the diagonal of $X$ is a pullback of the diagonal of $Y$. (3) and (4) follow from (2). (5) follows from (1)-(4), (6) from (1)-(5). Since monomorphisms in the subcategory of decidable objects are monomorphisms in the ambient topos, sums of decidable objects are disjoint and stable in the subcategory of decidable objects.\end{proof}

A geometric morphism $\phi_*:\Ee\leftrightarrows\Ff:\phi^*$ is called \emph{surjective} if $\phi^*$ is conservative, and \emph{implicative}\footnote{Johnstone \cite[Lemma 3.1]{J-1} calls implicative geometric morphisms \emph{subopen}. Lemma \ref{surjection} does not hold for general surjections, each Grothendieck topos being a quotient of a Boolean topos \cite{Barr0}.} if $\phi^*$ induces for each object $Y$ of $\Ff$ a functor between internal Heyting algebras $\phi_Y^*:\Sub_\Ff(Y)\to\Sub_\Ee(\phi^*(Y))$ which preserves \emph{implication}; in particular, $\phi_Y^*$ also preserves \emph{negation}. Any epimorphism $f:X\to X'$ in $\Ee$ induces a surjection $f_*:\Ee/X\leftrightarrows\Ee/X':f^*$ such that $f^*$ is \emph{logical} and hence implicative. The following lemma is well-known in this case, cf. \cite[Obs. 2.4]{AL} and \cite[Prop. 2]{BD2}.

\begin{lma}\label{surjection}The inverse image functor of an implicative surjection of toposes preserves and reflects decidable objects and complemented subobjects.\end{lma}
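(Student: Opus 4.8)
The proof splits into an easy \emph{preservation} half and a substantive \emph{reflection} half, and I would treat the two separately. For preservation the only input is that an inverse image functor $f^*$ is exact and cocontinuous: it preserves finite limits, the initial object, and finite sums, and—being simultaneously left exact and a left adjoint, hence preserving both monomorphisms and epimorphisms—it preserves image factorisations and therefore finite unions of subobjects. Thus a complemented decomposition $X=Y+Y'$ is carried to $f^*X=f^*Y+f^*Y'$, with $f^*Y\cap f^*Y'=f^*(Y\cap Y')=f^*0=0$ and $f^*Y\cup f^*Y'=f^*(Y\cup Y')=f^*X$, so $f^*Y$ is complemented. Since $f^*$ preserves products we have $f^*\delta_X=\delta_{f^*X}$, and applying the previous sentence to the diagonal shows that $f^*$ preserves decidable objects. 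None of this uses surjectivity.

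For reflection I would first extract the consequence of surjectivity that I need: $f^*$ is faithful; a faithful functor reflects monomorphisms and epimorphisms; and a topos is balanced; hence $f^*$ reflects isomorphisms, i.e.\ is conservative. Conservativity already reflects the subobject order, $Y\subseteq Z$ iff $f^*Y\subseteq f^*Z$ (invert $Y\cap Z\to Y$), so it remains to reflect the existence of complements. Given $Y\into X$ with $f^*Y$ complemented, I would compare $Y$ with its pseudocomplement $\neg Y$ in $\Ee$: the two are disjoint, so the canonical map $Y+\neg Y\to X$ is monic, and $Y$ is complemented exactly when this map is an isomorphism. By conservativity it is enough to invert $f^*(Y+\neg Y\to X)=(f^*Y+f^*(\neg Y)\to f^*X)$. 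One always has $f^*(\neg Y)\subseteq\neg(f^*Y)$, and the hypothesis supplies $f^*Y\cup\neg(f^*Y)=f^*X$, so everything reduces to the reverse inclusion $\neg(f^*Y)\subseteq f^*(\neg Y)$; reflection of decidable objects is then the special case $Y=\delta_X\into X\times X$.

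This reverse inclusion is the crux, and I expect it to be the main obstacle, because negation is not a geometric operation: inverse image functors do not preserve pseudocomplements in general, so conservativity alone cannot close the argument and surjectivity must be used more forcefully. The route I would pursue is descent. Surjectivity makes the unit $\eta_X:X\into f_*f^*X$ monic; the complement of $f^*Y$ is unique, hence canonical, so I would try to realise it as $\eta_X^{-1}$ of a suitable subobject of $f_*f^*X$ manufactured by $f_*$ from that complement, and then verify that the resulting subobject of $X$ is disjoint from $Y$ and joins with $Y$ to all of $X$—the uniqueness of complements supplying, for free, the compatibility (descent datum) that an arbitrary subobject would lack. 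Making this descent step precise, rather than the formal reductions above, is where the real difficulty lies.
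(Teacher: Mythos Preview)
Your preservation argument is correct and matches the paper's one-line justification. For reflection, you and the paper converge on the same strategy: exploit the monic unit $\eta_X:X\into\phi_*\phi^*X$ to pull a complement back from $\Ee$. The paper carries this out by observing that the naturality square for $\eta$ at the diagonal is a pullback (using only that $\eta$ is monic and that $\phi_*\phi^*$ preserves products, so $\phi_*\phi^*(\delta_X)=\delta_{\phi_*\phi^*X}$) and then asserting that this right-hand diagonal is complemented in $\Ff$, whence its pullback $\delta_X$ is too. Your descent paragraph is a reformulation of exactly this move, and you are right to single out the passage from $\Ee$ back to $\Ff$ as the crux.

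The difficulty you flag is genuine: neither your sketch nor the paper's proof justifies why $\phi_*$ of a complemented monomorphism should be complemented, and for bare surjections it need not be. Take $\Ff=\Set^{\to}$ (arrows of sets), $\Ee=\Set\times\Set$, and $\phi^*:(A\!\to\!B)\mapsto(A,B)$ the functor forgetting the arrow; this is faithful and left exact with right adjoint $(C,D)\mapsto(C\times D\overset{\pi_2}{\to}D)$, so $\phi$ is a surjective geometric morphism. The object $X=(2\to 1)$ has $\phi^*X=(2,1)$ decidable in the Boolean topos $\Set\times\Set$, yet $X$ is not decidable in $\Set^{\to}$: a complement of its diagonal in $(4\to 1)$ would have to be of the form $(2\to 0)$, which does not exist. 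Here in fact $\phi_*\phi^*X\cong X$, so the paper's ``right vertical monomorphism'' is visibly not complemented. Thus the inclusion $\neg(\phi^*Y)\subseteq\phi^*(\neg Y)$ you isolate can genuinely fail, and no descent argument will manufacture it from surjectivity alone. The paper's actual uses of the lemma (Propositions~\ref{decidablequotient} and~\ref{finitedecidable}) concern surjections $\Ee/U\to\Ee$ induced by an epimorphism $U\onto 1$; these are locally connected, hence open, and for open surjections $\phi^*$ is a Heyting functor and does commute with $\neg$, which closes your argument immediately.
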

\begin{proof}Inverse image functors preserve finite intersections and unions of subobjects and hence decidable objects and complemented subobjects.

Let $X$ be a subobject of an object $Y$ in $\Ff$, and let $\phi_*:\Ee\leftrightarrows\Ff:\phi^*$ be an implicative surjection. Assume that $\phi^*(X)$ is complemented in $\phi^*(Y)$, i.e. $\phi^*(Y)=\phi_Y^*(X)+ \neg\phi_Y^*(X)$. Since $\phi_Y^*$ preserves negation we have $\phi^*_Y(Y)=\phi_Y^*(X\cup\neg X)=\phi_Y^*(X)\cup\neg\phi_Y^*(X)=\phi_Y^*(X)+\neg\phi_Y^*(X)=\phi^*_Y(X)+\phi_Y^*(\neg X)$. The conservative inverse image functor $\phi^*$ \emph{reflects} binary sums so that $Y=X+\neg X$ and $X$ is complemented in $Y$. Since $\phi^*$ preserves diagonals, $\phi^*$ reflects decidable objects as well.\end{proof}

\begin{prp}\label{decidablequotient}Let $R$ be an equivalence relation on an object $X$. The quotient $X/R$ is decidable if and only if the equivalence relation $R$ is complemented in $X\times X$.\end{prp}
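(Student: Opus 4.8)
The plan is to exploit the fact that in any topos every equivalence relation is effective, so that $R$ is the kernel pair of the quotient map $q:X\onto X/R$. Concretely, writing $q\times q:X\times X\to (X/R)\times(X/R)$, effectiveness says precisely that $R$ is the pullback of the diagonal $\delta_{X/R}:X/R\into(X/R)\times(X/R)$ along $q\times q$, i.e. $R=(q\times q)^{-1}(\delta_{X/R})$.

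The forward implication is then immediate. If $X/R$ is decidable, then $\delta_{X/R}$ is complemented, hence classified by a map $c:(X/R)\times(X/R)\to 2$; since $R=(q\times q)^{-1}(\delta_{X/R})$ is classified by $c\circ(q\times q)$, its characteristic map factors through $2\into\Omega$, and so $R$ is complemented in $X\times X$.

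For the converse I would argue by descent along the epimorphism $q\times q$. First I would note that $q\times q$ is an epimorphism, being a composite of base changes of the epimorphism $q$ along product projections, which remain epimorphisms by stability of epimorphisms in a topos. Assuming $R$ complemented, let $\chi:X\times X\to 2$ be its characteristic map. The key step is to show that $\chi$ coequalises the kernel pair of $q\times q$. Identifying this kernel pair with $R\times R$, its two legs into $X\times X$ are $d_0\times d_0$ and $d_1\times d_1$, where $d_0,d_1:R\to X$ are the two projections of the relation; what must be checked is thus that $(d_0\times d_0)^{-1}(R)$ and $(d_1\times d_1)^{-1}(R)$ coincide as subobjects of $R\times R$. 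In the internal language this is the assertion that, for $R$-related pairs, $d_0r\,R\,d_0s\iff d_1r\,R\,d_1s$, which follows from the symmetry and transitivity of $R$ together with $d_0r\,R\,d_1r$ and $d_0s\,R\,d_1s$. This invariance is exactly where the equivalence-relation hypothesis enters, and I expect it to be the main obstacle; the only delicate point is to phrase the symmetry/transitivity argument diagrammatically (or via Kripke--Joyal) rather than naively with elements.

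Once $\chi$ is known to coequalise the kernel pair, the effectivity of the epimorphism $q\times q$ produces a unique factorisation $\chi=\bar\chi\circ(q\times q)$ with $\bar\chi:(X/R)\times(X/R)\to 2$. The map $\bar\chi$ classifies a complemented subobject $S\into(X/R)\times(X/R)$ satisfying $(q\times q)^{-1}(S)=\chi^{-1}(\top)=R=(q\times q)^{-1}(\delta_{X/R})$. Finally, since $q\times q$ is epic, equal pullbacks force $S=\delta_{X/R}$: the classifying maps $\chi_S\circ(q\times q)$ and $\chi_{\delta_{X/R}}\circ(q\times q)$ agree, and cancelling the epimorphism gives $\chi_S=\chi_{\delta_{X/R}}$. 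Hence $\delta_{X/R}$ is complemented and $X/R$ is decidable.
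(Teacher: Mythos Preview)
Your argument is correct, but it takes a different and somewhat longer route than the paper. Both proofs begin with the same pullback square expressing effectiveness, $R=(q\times q)^{-1}(\delta_{X/R})$, and both note that $q\times q$ is epimorphic. For the reflection direction, however, the paper invokes its Lemma~\ref{surjection}: an epimorphism $q\times q$ induces a surjective geometric morphism between slice toposes, and the inverse image of a surjection \emph{reflects} complemented subobjects. One line then finishes the proof. Your approach instead descends the $2$-valued characteristic map $\chi$ of $R$ along $q\times q$ by hand, checking that $\chi$ coequalises the kernel pair $R\times R\rightrightarrows X\times X$.

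Two remarks on your version. First, your explicit appeal to symmetry and transitivity of $R$ in the coequalisation step is unnecessary: you already know the $\Omega$-valued classifying map of $R$ factors as $\chi_{\delta_{X/R}}\circ(q\times q)$ by effectiveness, so it coequalises the kernel pair of $q\times q$, and since $2\into\Omega$ is monic the same holds for the $2$-valued map $\chi$. Equivalently, one may simply apply the (epi,~mono) orthogonality in a topos to the square $\chi_{\delta_{X/R}}\circ(q\times q)=(2\into\Omega)\circ\chi$ and obtain the diagonal filler $(X/R)\times(X/R)\to 2$ directly. Second, the paper's approach packages exactly this descent into the general statement that surjections reflect complementation (proved there via the monic unit of the adjunction), which is reused later; your hands-on argument is more self-contained but buys nothing extra here.
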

\begin{proof}The kernel pair $R$ of the quotient map $q:X\to X/R$ may be identified with the equaliser of the pair $qp_1,qp_2:X\times X\to X/R$ inducing thus a pullback square$$\xymatrix{R\ar[r]\ar[d]&X/R\ar[d]\\X\times X\ar[r]&X/R\times X/R}$$in which the lower horizontal map is an epimorphism. By Lemma \ref{surjection} $X/R$ is complemented in $X/R\times X/R$ if and only if $R$ is complemented in $X\times X$.\end{proof}

\begin{rmk}We are grateful to Martin Escard\'{o} for having pointed out to us the validity of Proposition \ref{decidablequotient}. Since the quotient of a decidable object is in general not decidable, the full subcategory spanned by decidable objects does not in general have quotients. This is the reason for which additional assumptions are needed to get an exact\footnote{An exact category is a regular category such that equivalence relations are effective, i.e. kernel pairs of their quotients.} and extensive subcategory, i.e. a \emph{pretopos}. The preceding proposition shows that equivalence relations must be complemented in such a subcategory.\end{rmk}

\section{Locally connected and locally constant objects}

From here on we assume that $\Ee$ is a \emph{Grothendieck topos}, and hence endowed with a geometric morphism $\gamma_*:\Ee\leftrightarrows\Ss:\gamma^*$ where $\Ss$ denotes the category of sets.

\begin{dfn}An object $X$ is called \begin{itemize}\item \emph{connected} if $\,0$ and $X$ are the only complemented subobjects of $X$;\item \emph{locally connected} if $X$ is a sum of connected subobjects.\end{itemize}A topos $\Ee$ is called \emph{locally connected} if all objects of $\Ee$ are locally connected.\end{dfn}

\begin{lma}\label{uniqueness}Any two sum decompositions of a locally connected object into non-initial connected subobjects coincide up to permutation and isomorphism of the summands.\end{lma}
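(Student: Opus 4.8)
The plan is to compare two decompositions $X=\sum_{i\in I}C_i$ and $X=\sum_{j\in J}D_j$ into nonzero connected subobjects and to produce a bijection $\phi:I\to J$ with $C_i=D_{\phi(i)}$ as subobjects of $X$. The whole argument rests on the fact that in a Grothendieck topos coproducts are universal, i.e. stable under pullback. First I would fix an index $i$ and pull back the decomposition $X=\sum_{j}D_j$ along the inclusion $C_i\into X$. Universality of coproducts yields a coproduct decomposition $C_i=\sum_{j\in J}(C_i\cap D_j)$ inside the topos. In particular each $C_i\cap D_j$ is a complemented subobject of $C_i$, its complement being $\sum_{j'\neq j}(C_i\cap D_{j'})$.

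Since $C_i$ is connected, each $C_i\cap D_j$ is therefore either $0$ or all of $C_i$. Because the $D_j$ are pairwise disjoint, being summands of a coproduct, the intersections $C_i\cap D_j$ are pairwise disjoint as well, so at most one of them can equal $C_i$; and because $C_i\neq 0$ at least one of them must be nonzero. Hence there is exactly one index, call it $\phi(i)\in J$, with $C_i\cap D_{\phi(i)}=C_i$, that is $C_i\subseteq D_{\phi(i)}$, while $C_i\cap D_j=0$ for all $j\neq\phi(i)$. Running the symmetric argument with the roles of the two decompositions exchanged produces a map $\psi:J\to I$ with $D_j\subseteq C_{\psi(j)}$ for every $j$.

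It then remains to check that $\phi$ and $\psi$ are mutually inverse and witness $C_i=D_{\phi(i)}$. From the two inclusions we get $C_i\subseteq D_{\phi(i)}\subseteq C_{\psi(\phi(i))}$; since the summands $C_i$ of a coproduct are pairwise disjoint and $C_i\neq 0$, the inclusion $C_i\subseteq C_{\psi(\phi(i))}$ forces $\psi(\phi(i))=i$, and symmetrically $\phi(\psi(j))=j$. Thus $\phi$ is a bijection, and the sandwich $C_i\subseteq D_{\phi(i)}\subseteq C_i$ gives $C_i=D_{\phi(i)}$ as subobjects of $X$, which is exactly the asserted coincidence up to permutation and isomorphism of the summands.

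The one point that needs care is the treatment of the zero object: under the stated definition $0$ is vacuously connected, so uniqueness can only hold once we agree that the summands of a decomposition are taken nonzero, equivalently after discarding zero summands. With that convention the nonvanishing of each $C_i$ invoked above is precisely what makes the counting in the second step go through, and no finiteness hypothesis on the index sets $I$ or $J$ is needed, since the whole argument is carried out one summand at a time.
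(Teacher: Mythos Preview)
Your proof is correct and follows essentially the same route as the paper: both arguments use stability of coproducts under pullback to decompose a connected summand as $C_i=\sum_j(C_i\cap D_j)$ and then invoke connectedness to force $C_i\subseteq D_j$ for a unique $j$. The paper merely states this key step and leaves the construction of the bijection implicit, whereas you spell out $\phi$, $\psi$ and their mutual inversion explicitly and flag the convention that summands be non-void; this is a welcome clarification but not a different approach.
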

\begin{proof}It is enough to show that for a connected, non-void subobject $X$ of a sum $Y=\sum_{i\in I}Y_i$ there exists (a unique) $i\in I$ such that $X$ is a subobject of $Y_i$. Indeed, by stability of sums, we have $X=\sum_{i\in I}X\cap Y_i$. Since $X$ is connected, there is $i\in I$ such that $X\cap Y_i=X$ whence $X$ is a subobject of $Y_i$.\end{proof}

The set of connected complemented subobjects (i.e. \emph{connected components}) of $X$ will be denoted $\gamma_!(X)$.

\begin{prp}\label{locallyconnected}A Grothendieck topos $\Ee$ is locally connected if and only if the inverse image functor $\gamma^*:\Ss\to\Ee$ admits a left adjoint $\gamma_!:\Ee\to\Ss$.\end{prp}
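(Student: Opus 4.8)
The plan is to prove both implications directly, using throughout that a Grothendieck topos is extensive, so that a morphism $X\to\gamma^*(S)=\sum_{s\in S}1$ is the same datum as a decomposition $X=\sum_{s\in S}X^{(s)}$ of $X$ into an $S$-indexed family of pairwise disjoint complemented subobjects. The whole argument rests on translating maps into the ``constant'' objects $\gamma^*(S)$ into combinatorics of complemented subobjects.

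For the forward implication, assume $\Ee$ is locally connected and set $\gamma_!(X)$ to be the set of connected components of $X$, a genuine small set since $\Ee$ is well-powered. I would first record that, in the unique decomposition $X=\sum_{c\in\gamma_!(X)}X_c$ into connected subobjects supplied by Lemma \ref{uniqueness}, every complemented subobject $Z\into X$ is a sum of components: indeed $Z=\sum_c(Z\cap X_c)$ by stability of sums, and each $Z\cap X_c$ is a complemented subobject of the connected object $X_c$, hence $0$ or $X_c$. Consequently an $S$-indexed decomposition of $X$ is exactly a function $\gamma_!(X)\to S$, so the canonical map $\eta_X:X\to\gamma^*(\gamma_!(X))$ sending $X_c$ into the $c$-th summand is a universal arrow from $X$ to $\gamma^*$. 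The standard criterion for the existence of an adjoint then produces $\gamma_!\dashv\gamma^*$, with functoriality of $\gamma_!$ coming for free from the universal property.

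For the converse, assume $\gamma^*$ admits a left adjoint $\gamma_!$ and fix an object $X$. The unit $\eta_X:X\to\gamma^*\gamma_!(X)=\sum_{c\in\gamma_!(X)}1$ decomposes $X$, by extensivity, as $X=\sum_{c}X_c$ with $X_c$ the fibre of $\eta_X$ over the $c$-th summand. It remains to show each $X_c$ is connected, and this is the heart of the matter. Given any complemented decomposition $X_c=A+B$, I would refine the decomposition of $X$ by splitting $X_c$ into $A$ and $B$; writing $\pi:C'\to\gamma_!(X)$ for the surjection of indexing sets that merges the two new labels $a,b$ back to $c$, this refinement is a map $r:X\to\gamma^*(C')$ with $\gamma^*(\pi)\circ r=\eta_X$. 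By the universal property of the unit, $r=\gamma^*(g)\circ\eta_X$ for a unique $g:\gamma_!(X)\to C'$, and comparing with $\eta_X$ forces $\pi\circ g=1$, so $g$ is a section of $\pi$ and $g(c)\in\{a,b\}$.

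Say $g(c)=a$. Restricting to $B\into X_c$, the map $r|_B$ factors through the $b$-th coproduct inclusion by construction of $r$, but equals $\gamma^*(g)\circ\eta_X|_B$ and hence also factors through the $a$-th inclusion; since distinct coproduct inclusions $1\into\gamma^*(C')$ are disjoint, $r|_B$ factors through $0$ and $B=0$. Symmetrically $g(c)=b$ gives $A=0$. Thus $X_c$ has no nontrivial complemented subobject and is connected, so $X=\sum_c X_c$ exhibits $X$ as locally connected. The main obstacle is precisely this last step: extensivity hands us a canonical decomposition of every object for free, but seeing that the pieces are genuinely connected requires the refinement-and-section argument above, exploiting disjointness of coproduct inclusions to collapse the offending summand to $0$.
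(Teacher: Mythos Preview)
Your proof is correct and follows essentially the same strategy as the paper's: define $\gamma_!$ as the set of connected components for the forward direction, and for the converse decompose $X$ along the unit $\eta_X$ and use the adjunction to show each fibre $X_c$ is connected. The only cosmetic difference is that the paper argues connectedness by applying $\gamma_!$ to a splitting $X_c=X_c'+X_c''$ (getting $\gamma_!(X_c')+\gamma_!(X_c'')$ equal to a singleton, so one summand has empty $\gamma_!$ and hence is $0$), whereas you stay on the other side of the adjunction and factor the refined decomposition through $\eta_X$ to obtain a section of the merging map; these are dual phrasings of the same step.
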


\begin{proof}The connected components construction $\gamma_!$ extends to a functor which is left adjoint to $\gamma^*$. Conversely, assuming the existence of $\gamma_!$, we can construct for each object $X$ complemented subobjects $X_i$ by pulling back the elements $i:1\to\gamma_!(X)$ along the unit $X\to\gamma^*\gamma_!(X)$. Indeed, since every set is disjoint union of its elements, the $X_i$ are disjoint subobjects of $X$, and their union is $X$. None of the $X_i$ can be initial since $\gamma^*$ reflects initial objects. The complemented subobject $X_i$ of $X$ is thus taken by $\gamma_!$ to a non-initial complemented subobject of $\gamma_i(X)$ which is singleton. If $X_i=X'_i+ X''_i$ then one of the summands is taken to a singleton, and equals $X_i$, whence all $X_i$ are connected and non-initial, and $\Ee$ is locally connected.\end{proof}

\begin{rmk}A topological space $E$ is locally connected if and only if the topos $\Sh(E)$ of set-valued sheaves on $E$ is locally connected. A topological space which is simultaneously locally connected and totally disconnected has open singletons, and is thus discrete. In particular, compact zero-dimensional spaces are locally connected only when they are discrete, i.e. finite.

If $\Ss$ is \emph{not} the category of sets, there is a difference between \emph{essential} geometric morphisms (i.e. such that $\gamma^*:\Ss\to\Ee$ has a left adjoint $\gamma_!:\Ee\to\Ss$) and \emph{locally connected} geometric morphisms (i.e. such that $\gamma^*:\Ss\to\Ee$ has an $\Ss$-indexed left adjoint $\gamma_!:\Ee\to\Ss$). Barr-Par\'e \cite{BP} have characterised locally connected geometric morphisms $\gamma:\Ee\to\Ss$ for general $\Ss$ in much the same way as in Proposition \ref{locallyconnected} by replacing the notion of connected object with the notion of $\Ss$-molecule.

An object $X$ of $\Ee$ is called an $\Ss$-molecule if the map $\Omega_\Ss\to\gamma_*((\gamma^*\Omega_\Ss)^X)$ (adjoint to the diagonal) is an isomorphism. A geometric morphism $\gamma:\Ee\to\Ss$ is then locally connected if and only if it is implicative and each object of $\Ee$ is a sum of $\Ss$-molecules, cf. \cite[Theorem 14]{BP}. Even for Grothendieck toposes $\Ee$, considering non-initial connected objects as $\Ss$-molecules, i.e. as those objects $X$ of $\Ee$ for which $2^X$ has exactly two global sections, is a useful device.\end{rmk}

\begin{lma}\label{locallyconnected2}For any connected\footnote{A geometric morphism is called connected if the inverse image functor is fully faithful.}, locally connected geometric morphism of Grothen-dieck toposes $\phi:\Ee\to\Ff$, the inverse image functor preserves connected objects.\end{lma}

\begin{proof}Since $\phi$ is a locally connected geometric morphism, its inverse image functor preserves exponentials (cf. Johnstone \cite[Lemma C.3.3.1]{J}) so that we have $$\phi^*(2^Y)=\phi^*(2)^{\phi^*(Y)}=2^{\phi^*(Y)}.$$ Applying the global section functor of $\Ee$ we get$$\gamma^\Ee_*\phi^*(2^Y)=\gamma^\Ff_*\phi_*\phi^*(2^Y)=\gamma^\Ff_*(2^Y)$$where the last identity uses that $\phi$ is a connected geometric morphism. For any non-initial connected object $Y$ of $\Ff$, $2^{\phi^*(Y)}$ has thus exactly two global sections, i.e. $\phi^*(Y)$ is a non-initial connected object of $\Ee$.\end{proof}

\begin{lma}\label{complemented1}A complemented subobject of a locally connected object is locally connected.\end{lma}

\begin{proof}Write $X$ as a sum of connected objects $X_i$, and let $Y$ be a complemented subobject of $X$. Then $Y\cap X_i$ is complemented in $X_i$ and hence either $0$ or $X_i$. This implies that $Y$ is a sum of a subset of the $X_i'$s and hence locally connected.\end{proof}

For the following definition, recall that an object of a Grothendieck topos $\Ee$ is called \emph{constant} if it belongs to the essential image of the functor $\gamma^*:\Ss\to\Ee$.

A \emph{cover} (resp. \emph{open cover}) $(U_i)_{i\in I}$ of $1$ is a family of objects (resp. subterminal objects) of $\Ee$ such that the induced map $\sum_{i\in I}U_i\to 1$ is an epimorphism.

\begin{dfn}An object $X$ of a Grothendieck topos $\Ee$ is called \emph{locally constant} (resp. \emph{locally trivial}) if there is a cover (resp. open cover) $(U_i)_{i\in I}$ of the terminal object of $\Ee$ such that $X\times U_i$ is constant in the slice topos $\Ee/U_i$ for each $i\in I$.\end{dfn}

\begin{lma}\label{etale}For each topological space $B$, the locally constant objects of the topos $\Sh(B)$ of set-valued sheaves on $B$ correspond to covering spaces of $B$.\end{lma}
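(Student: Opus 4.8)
The plan is to use the classical equivalence between $\Sh(B)$ and the category of \'etale spaces (local homeomorphisms) over $B$, under which a sheaf $X$ corresponds to a local homeomorphism $p_X\colon E_X\to B$ whose fibres are the stalks of $X$. Under this equivalence the constant sheaf $\gamma^*(S)$ associated to a set $S$ corresponds to the projection $B\times S\to B$ with $S$ discrete; and for an open subset $U\subseteq B$, viewed as a subterminal object, the slice equivalence $\Sh(B)/U\simeq\Sh(U)$ sends $X\times U$ to the restriction $p_X^{-1}(U)\to U$. Recalling that a covering space of $B$ is exactly a local homeomorphism $p\colon E\to B$ for which there is an open cover $(V_j)_j$ of $B$ with $p^{-1}(V_j)\cong V_j\times S_j$ over $V_j$ for discrete sets $S_j$, the lemma amounts to identifying, inside the category of \'etale spaces over $B$, the locally constant objects with the covering spaces.

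One direction is immediate. If $p_X\colon E_X\to B$ is a covering space then the trivialising opens $V_j$ form an open cover of the terminal object of $\Sh(B)$ over which $X$ becomes constant, so $X$ is locally trivial and, a fortiori, locally constant.

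For the converse I would begin with a cover $(U_i)_{i\in I}$ of $1$ witnessing local constancy and replace each $U_i$ by its \'etale space $q_i\colon F_i\to B$. The defining epimorphism $\sum_{i\in I}U_i\to 1$ translates into surjectivity of $\coprod_{i\in I}F_i\to B$; since \'etale maps are open, each image $q_i(F_i)$ is open and these images cover $B$. The slice equivalence $\Sh(B)/U_i\simeq\Sh(F_i)$ carries $X\times U_i$ to the pulled-back \'etale space $E_X\times_B F_i\to F_i$, so the hypothesis says this pullback is trivial, isomorphic to $F_i\times S_i\to F_i$. Now every point of $F_i$ has an open neighbourhood $W$ on which $q_i$ restricts to a homeomorphism onto an open subset $q_i(W)\subseteq B$; over $W$ the pullback $E_X\times_B F_i$ is identified with the restriction of $p_X$ to $q_i(W)$, which is therefore trivial. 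Letting $W$ range over such neighbourhoods and $i$ over $I$ yields an open cover of $B$ trivialising $p_X$, so $p_X$ is a covering space.

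The main obstacle is exactly this last reduction: the cover in the definition of a locally constant object consists of arbitrary objects $U_i$ rather than open subobjects, so one cannot directly read off an open cover of $B$. It is the openness and the local-homeomorphism property of \'etale maps that permits a trivialisation over the \'etale space $F_i$ to be pushed down to honest open subsets of $B$, at the cost of allowing the discrete fibre $S_i$ to vary from chart to chart, which the definition of covering space tolerates. As a byproduct the argument shows that in $\Sh(B)$ the notions of locally constant and locally trivial object coincide.
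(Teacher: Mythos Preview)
Your argument is correct and follows essentially the same route as the paper: both identify covering spaces with locally trivial sheaves via the \'etale-space equivalence, and both reduce ``locally constant'' to ``locally trivial'' by refining the given cover $(U_i)$ to an open cover of $B$. The paper compresses this last step into the single observation that every object of $\Sh(B)$ is a quotient of a sum of subterminal objects, whereas you unpack it concretely using the local-homeomorphism property of the \'etale spaces $F_i\to B$; these are the same manoeuvre viewed from the sheaf side and the space side, respectively.
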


\begin{proof}The equivalence between the categories of local homeomorphisms $E\to B$ and of sheaves on $B$, restricts to an equivalence between the categories of coverings $E\to B$ and of \emph{locally trivial} sheaves on $B$. Since any object of $\Sh(B)$ is a quotient of a sum of subterminal objects, \emph{locally constant} sheaves can be trivialised by open covers of the terminal object, and are thus \emph{locally trivial}.\end{proof}

\begin{rmk}\label{Galois}A topos is called \emph{localic} if it is generated by its subterminal objects or, equivalently, by its locally trivial objects. The previous proof shows that in a localic topos, locally constant objects are the same as locally trivial objects.\end{rmk}

\begin{prp}\label{finitedecidable}Locally constant objects of a Grothendieck topos are decidable.\end{prp}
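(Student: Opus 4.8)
The plan is to reduce decidability of a locally constant object to decidability of constant objects, checked locally over the trivialising cover, and then to descend the conclusion along a surjection using the reflection property of Lemma \ref{surjection}.

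First I would record the base case: every constant object of a Grothendieck topos is decidable. Indeed, all objects of $\Ss$ are decidable, and the inverse image functor $\gamma^*:\Ss\to\Ee$ preserves finite limits and small colimits, hence preserves decidable objects and complemented subobjects exactly as noted in the proof of Lemma \ref{surjection}. Thus the essential image of $\gamma^*$, namely the constant objects, consists of decidable objects. Since a slice $\Ee/U$ of a Grothendieck topos is again a Grothendieck topos, the same statement holds in each slice.

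Now let $X$ be locally constant, trivialised by a cover $(U_i)_{i\in I}$, so that each $X\times U_i$ is constant, and hence decidable, in the slice topos $\Ee/U_i$. Set $U=\sum_{i\in I}U_i$. By extensivity of the topos there is an equivalence $\Ee/U\simeq\prod_{i\in I}\Ee/U_i$, under which the pullback of $X$ to $\Ee/U$ corresponds to the family $(X\times U_i)_{i\in I}$. Because diagonals and complemented subobjects are formed componentwise in a product category, and each component $X\times U_i$ is decidable, the pullback of $X$ is decidable in $\Ee/U$.

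Finally, since $(U_i)_{i\in I}$ is a cover, the canonical map $U\to1$ is an epimorphism, so the \'etale geometric morphism $\Ee/U\to\Ee$ — whose inverse image is pullback along $U\to1$, sending $X$ to the object just discussed — is surjective. Lemma \ref{surjection} then guarantees that this inverse image functor reflects decidable objects, and as the pullback of $X$ is decidable in $\Ee/U$, the object $X$ is decidable in $\Ee$. The only step requiring genuine care is the identification of the slice over the coproduct with the product of the slices together with the componentwise nature of decidability there; everything else is formal once Lemma \ref{surjection} is in hand, so I expect that extensivity bookkeeping to be the main (and only real) obstacle.
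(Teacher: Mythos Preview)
Your argument is correct and follows essentially the same route as the paper: show constant objects are decidable, pass to the trivialising cover $U=\sum_i U_i$, verify that $X\times U$ is decidable in $\Ee/U$, and descend via Lemma \ref{surjection} using that the epimorphism $U\to 1$ induces a surjection of slice toposes. The only cosmetic difference is in the middle step: the paper shows that arbitrary sums of decidable objects are decidable (finite sums by Lemma \ref{decidable}(1), then a filtered colimit argument) and applies this to $X\times U=\sum_i X\times U_i$, whereas you use the infinitary extensivity equivalence $\Ee/U\simeq\prod_i\Ee/U_i$ and check decidability componentwise---these are two phrasings of the same fact.
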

\begin{proof}Constant objects are decidable since all sets are decidable and the inverse image functor preserves decidable objects. Finite sums of decidable objects are decidable by Lemma \ref{decidable}(1). A general sum can be written as filtered colimit of finite sums. In a Grothendieck topos, filtered colimits commute with finite limits. It follows that general sums of decidable objects are decidable. Let $X$ be a locally constant object and $(U_i)_{i\in I}$ a cover of $1$ such that $X\times U_i$ is constant in $\Ee/U_i$ for each $i\in I$. Putting $U=\sum_{i\in I} U_i$, we thus get an epimorphism $U\onto 1$ such that $X\times U$ is a decidable object of $\Ee/U$. Since epimorphisms induce implicative surjections between slice categories, Lemma \ref{surjection} implies that $X$ is decidable.\end{proof}

\section{Finite objects}This section introduces a suitable notion of \emph{finiteness} for Grothendieck toposes. Our notion combines \emph{local finiteness} and \emph{decomposition-finiteness}. Surprisingly, to the best of our knowledge, the combination of both has not been studied so far, cf. \cite{SGA4,KLM,AL,H}. If terminal objects are finite, the full subcategory spanned by finite objects is an essentially small \emph{pretopos} with \emph{complemented subobjects} (cf. Proposition \ref{pretopos}). This induces a factorisation of the global section functor into a \emph{connected} followed by an \emph{atomic, separated} geometric morphism (cf. Theorem \ref{main1}) and leads to several characterisations of connected, finitely generated Grothendieck toposes (cf. Theorem \ref{classified}).

\begin{dfn}\label{finite}An object $X$ of a Grothendieck topos $\Ee$ is called\begin{itemize}
\item \emph{locally finite} if there is a cover $(U_i)_{i\in I}$ of the terminal object of $\Ee$ such that $X\times U_i\cong\gamma^*(\{1,\dots,n_i\})\times U_i$ in $\Ee/U_i$ for each $i\in I$;\item \emph{decomposition-finite} if it is a finite sum of connected objects;\item \emph{finite} if it is locally finite and decomposition-finite.\end{itemize}\end{dfn}

\begin{rmk}\label{etalebis}Set-valued sheaves on a topological space $E$ induce local homeomorphisms with values in $E$ and vice-versa, cf. Lemma \ref{etale}. Covering spaces correspond hereby to locally constant sheaves. A sheaf is finite if and only if the associated covering space has finite fibres over $E$ and finitely many connected components.\end{rmk}

\begin{lma}\label{binary}Binary sums and binary products of finite objects are finite.\end{lma}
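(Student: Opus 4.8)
The plan is to verify the two defining clauses of finiteness—local finiteness and decomposition-finiteness—separately, treating the sum and the product in each case. Throughout I fix connected decompositions $X=\sum_{k=1}^{m}C_k$ and $Y=\sum_{l=1}^{n}D_l$, together with trivialising covers $(U_i)_{i\in I}$ and $(V_j)_{j\in J}$, so that $X\times U_i\cong\gamma^*(\{1,\dots,a_i\})\times U_i$ in $\Ee/U_i$ and $Y\times V_j\cong\gamma^*(\{1,\dots,b_j\})\times V_j$ in $\Ee/V_j$.

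For local finiteness I would first build a common trivialising cover: the family $(U_i\times V_j)_{(i,j)}$ is again a cover of $1$, since the product of the epimorphisms $\sum_iU_i\onto1$ and $\sum_jV_j\onto1$ is an epimorphism and products distribute over sums. Writing $W=U_i\times V_j$, pullback along the projections $W\to U_i$ and $W\to V_j$ sends constant objects to constant objects (these projections are the inverse image parts of the induced slice geometric morphisms over $\Ss$), so $X\times W$ and $Y\times W$ are constant over $W$ of values $a_i$ and $b_j$. Since $\gamma^*$ preserves finite sums, $(X+Y)\times W\cong\gamma^*(\{1,\dots,a_i+b_j\})\times W$; and since the product in $\Ee/W$ of $X\times W$ and $Y\times W$ is $(X\times Y)\times W$, we get $(X\times Y)\times W\cong\gamma^*(\{1,\dots,a_ib_j\})\times W$. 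Hence both $X+Y$ and $X\times Y$ are locally finite over this common cover.

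Decomposition-finiteness of the sum is immediate: $X+Y=\sum_kC_k+\sum_lD_l$. For the product, distributivity gives $X\times Y=\sum_{k,l}C_k\times D_l$, a finite sum, so it suffices to show that each $C\times D$ with $C,D$ connected and finite is a finite sum of connected objects. Here lies the work. I would first note that a connected locally finite object is decidable by Proposition \ref{finitedecidable}, and that its local fibre-value agrees on non-void overlaps of the cover; this value is therefore locally constant, and connectedness forces it to be a single integer, so $C$ and $D$ carry well-defined constant degrees $d_C,d_D$. Next, by Lemma \ref{complemented1}, any non-void connected component $Z$ of $C\times D$ has complemented image in the connected object $C$ under the projection $C\times D\to C$, hence surjects onto $C$; symmetrically $Z$ surjects onto $D$. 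Finally I would count: viewing $C\times D\to D$ as a family of constant degree $d_C$ over the connected base $D$, each component $Z$ surjects onto $D$ and so contributes a positive constant degree, and the degrees of the disjoint components add up to $d_C$; this bounds the number of connected components by $d_C$ and exhibits $C\times D$ as the finite sum of its components.

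The main obstacle is precisely this last counting step. The surjectivity of components onto the factors is clean, but converting a local bound into a global one is delicate: one wants to count over a trivialising piece $W$ that is \emph{connected}, so that, by Lemma \ref{uniqueness}, complemented subobjects of $\gamma^*(\{1,\dots,d_Cd_D\})\times W$ are genuine sub-sums, and one wants every component to be detected over that piece. I would secure this by first reducing to the case where $C$ and $D$ are inhabited—restricting to the subterminal $\supp(C)\cap\supp(D)$, off which the product vanishes—so that every component, surjecting onto the non-void $D$, is non-void over each cover member; the additivity of degrees along a complemented decomposition then gives the bound $d_C$. Establishing that the degree is genuinely constant and that degrees add along complemented decompositions is the technical heart, and is where I expect the argument to require the most care.
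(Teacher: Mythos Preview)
Your proposal is correct and tracks the paper's proof closely. Local finiteness is handled identically (common refinement $U\times V$ of the two trivialising covers, using that $\gamma^*$ preserves finite sums and products). For decomposition-finiteness of the product you both reduce to $C,D$ connected and both invoke the restriction to the common support; the paper phrases this as observing that $\supp(C)$ and $\supp(D)$ are connected complemented subobjects of $1$ which, having non-void intersection, coincide with a single connected component $S$, so $C\times D$ lives in the connected slice $\Ee/S$.

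The one substantive difference is the counting step. The paper, having the trivialisation $C\times D\times(U\times V)\cong\gamma^*(\{1,\dots,mn\})\times(U\times V)$ over the connected slice $\Ee/S$, simply reads off the bound $mn$ on the number of summands in any complemented decomposition of $C\times D$; a maximal such decomposition then consists of connected pieces, giving decomposition-finiteness directly. Your route through a ``degree over $D$'' and surjectivity of components onto $D$ (via Lemma~\ref{complemented1}) yields the sharper bound $d_C$, but at the cost of the additivity-of-degree lemma you flag as the technical heart. That extra work is not needed: once you are in the connected slice---which is exactly the support reduction you propose in your obstacle paragraph---the product bound $mn$ is immediate from the local sum-decomposition, and you can dispense with the surjectivity and degree-additivity arguments entirely. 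One caution on phrasing: you speak of ``connected components'' of $C\times D$ before you have shown it is locally connected; it is cleaner to bound the size of an arbitrary complemented decomposition first and then pass to a maximal one.
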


\begin{proof}A binary sum of locally finite (resp. decomposition-finite) objects is locally finite (resp. decomposition-finite), so binary sums of finite objects are finite.

Let $X,Y$ by finite objects. Writing them respectively as finite sums of connected objects, we can decompose $X\times Y$ as a finite sum of products $X_i\times Y_j$ so that, without loss of generality, we can assume $X$ and $Y$ are both connected and locally finite. There are thus objects $U,V$ and natural numbers $m,n$ such that $X\times U\cong\gamma^*(\{1,\dots,m\})\times U$ and $Y\times V\cong\gamma^*(\{1,\dots,n\})\times V$. Thus $X\times Y\times U\times V\cong\gamma^*(\{1,\dots,m\}\times\{1,\dots,n\})\times(U\times V)$ in $\Ee/(U\times V)$, and $X\times Y$ is locally finite.

By the epi/mono factorisation system of $\Ee$ we have $X\twoheadrightarrow\supp(X)\hookrightarrow 1$ and $Y\twoheadrightarrow\supp(Y)\hookrightarrow 1$. If $X\times Y\not=0$ then $\supp(X)\times\supp(Y)\not=0$. Since $X$ and $Y$ are connected, so are their supports $\supp(X)$ and $\supp(Y)$. Moreover, by Corollary \ref{finitestability} below, they are complemented subobjects of $1$ so that (having non-void intersection) they represent the same connected component $S=\supp(X)=\supp(Y)$ of $1$.

Therefore, the binary product $X\times Y$ belongs to the connected slice topos $\Ee/S$ and has thus (according to the local sum-decomposition above) at most $mn$ connected components. In particular $X\times Y$ is decomposition-finite.\end{proof}

\begin{sct}\label{Kuratowski}{\bf Kuratowski-finiteness.} The \emph{power object} $\Pp(X)=\Omega^X$ is a \emph{monoid} with respect to \emph{join}. The least join-submonoid $\Pp_f(X)$ of $\Pp(X)$ containing the image of the \emph{singleton map} $\{\}:X\to\Omega^X$ (transpose of the characteristic map $X\times X\to\Omega$ of the diagonal) is often denoted $K(X)$ in literature. An object $X$ is called \emph{Kuratowski-finite} precisely when $\Pp_f(X)$ contains the top element of $(\Pp(X),\vee)$, cf. \cite{AL,KLM}.

An object $X$ of $\Ee$ is decidable if and only if the singleton map $\{\}:X\to\Omega^X$ factors through $2^X\hookrightarrow\Omega^X$. If $X$ is decidable, then $\Pp_f(X)\subset 2^X$ and $X$ is Kuratowski-finite if and only if $\Pp_f(X)=2^X$, cf. \cite{AL,KLM}.

In a Grothendieck topos $\Ee$, each object $X$ generates a \emph{free monoid} $X^*=\sum_{n\in\NN}X^n$, the multiplication being concatenation. There is thus a \emph{unique} monoid map $\kappa_X:(X^*,\cdot)\to(2^X,\vee)$ extending $\{\}:X\to 2^X$, and $X$ is decidable Kuratowski-finite precisely when $\kappa_X$ is an epimorphism.\end{sct}

\begin{lma}\label{complemented3}A subobject of a decidable Kuratowski-finite object is complemented if and only if it is decidable Kuratowski-finite.\end{lma}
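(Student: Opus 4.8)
The plan is to prove both directions of the equivalence, exploiting that decidable Kuratowski-finiteness is characterized via the map $\kappa_X$ and that complementedness interacts well with the decidable-object subcategory.

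First I would handle the easier direction: suppose $Y\into X$ is a complemented subobject of a decidable Kuratowski-finite object $X$. By Lemma \ref{decidable}(2), $Y$ is decidable, being a subobject of the decidable object $X$. It remains to show $Y$ is Kuratowski-finite. The idea is that a complemented subobject is in particular a \emph{retract-like} summand: writing $X=Y+Y'$, the inclusion $Y\into X$ is split by the projection $X\to Y$ (sending $Y'$ to a chosen point, or more carefully working with the summand structure). Since $X$ is Kuratowski-finite, $\Pp_f(X)=2^X$ contains the top element; I would show that intersecting the generating finite subobjects of $X$ with $Y$ exhibits the top element of $2^Y$ inside $\Pp_f(Y)$. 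Concretely, the surjection $\kappa_X:X^*\onto 2^X$ composed with the monoid map $2^X\to 2^Y$ induced by pullback along $Y\into X$ should factor through $\kappa_Y:Y^*\to 2^Y$, forcing $\kappa_Y$ to hit the top element. Finiteness of sums is well-behaved here because $Y$ is complemented, so the restriction map on power objects is a join-monoid homomorphism preserving tops.

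For the converse, suppose $Y\into X$ is a subobject that is itself decidable Kuratowski-finite; I must show $Y$ is complemented in $X$. Since $X$ is decidable, its diagonal is complemented, so equality of elements of $X$ is a decidable predicate. The membership predicate ``$x\in Y$'' for $Y\into X$ a decidable Kuratowski-finite subobject should then be complemented: a Kuratowski-finite subobject of a decidable object is built as a finite join of singletons $\{y_1\}\vee\dots\vee\{y_k\}$, and since each singleton $\{y_i\}\into X$ is complemented (because the diagonal of $X$ is complemented, so $x=y_i$ is a decidable condition), the finite join of complemented subobjects is again complemented by Lemma \ref{decidable}(1) applied to the summands. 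The subtlety is that ``finite join of singletons'' is an internal, not external, statement — Kuratowski-finiteness gives a generalized element of $\Pp_f(X)$ hitting $Y$, realized over a cover rather than globally. So I would argue that the characteristic map $\chi_Y:X\to\Omega$ factors through $2\into\Omega$, which by the discussion preceding Lemma \ref{retract} is exactly the assertion that $Y$ is complemented; the factorization is obtained by observing that membership in a decidable Kuratowski-finite subobject is itself decidable, using the epimorphism $\kappa_X$ to reduce to the finitely-many-singletons case locally.

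The main obstacle I expect is the converse direction, specifically making the ``finite join of singletons'' argument precise \emph{internally}, since Kuratowski-finiteness is an existential statement that only holds locally over some cover and need not produce honest global elements $y_1,\dots,y_k$. The clean way around this is to avoid choosing elements altogether: I would instead show directly that the characteristic map $\chi_Y$ lands in $2=1+1$ by checking it pointwise using the decidability of $X$ — for any generalized element $x$ of $X$, the condition $x\in Y$ is equivalent, after pulling back along the epimorphism $\kappa_X:X^*\onto 2^X$ exhibiting Kuratowski-finiteness, to a decidable equality condition in the decidable object $X$. This localizes the problem to a cover where $Y$ is literally a finite coproduct of complemented singletons, and complementedness, being a property detected by surjections (Lemma \ref{surjection}), then descends back to $X$.
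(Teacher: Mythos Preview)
Your forward direction (complemented $\Rightarrow$ decidable Kuratowski-finite) is essentially the paper's argument: both exploit the retraction structure that a complemented subobject $Y\into X$ provides, building compatible retractions $X^*\to Y^*$ and $2^X\to 2^Y$ so that the image $\Pp_f(Y)\hookrightarrow 2^Y$ becomes a retract of $\Pp_f(X)\hookrightarrow 2^X$, hence an isomorphism whenever the latter is.

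For the converse your argument is correct but takes a different route from the paper. You localize: Kuratowski-finiteness of $Y$ yields a cover over which $Y$ is a finite union of ``singletons'' (graphs of sections), each complemented in $X$ by decidability of $X$ and Lemma~\ref{retract}; the finite union is then complemented, and Lemma~\ref{surjection} lets you descend. (One small slip: where you invoke ``the epimorphism $\kappa_X$'' you really want $\kappa_Y$---it is the Kuratowski-finiteness of the \emph{subobject} $Y$ that produces the trivialising cover, not that of $X$.) The paper instead avoids localization entirely: since any monomorphism $Y\into X$ induces a join-preserving map $\Pp_f(Y)\to\Pp_f(X)$ (direct image of finite subobjects), and since $X$ decidable forces $\Pp_f(X)\subset 2^X$, the top element of $\Pp_f(Y)$---which exists precisely because $Y$ is Kuratowski-finite---is carried to a global element of $2^X$, i.e.\ a complemented subobject of $X$, and that element is $Y$. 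This one-line argument buys you a cleaner proof with no appeal to descent; your approach has the mild advantage of making the ``finite union of decidable singletons'' intuition explicit, at the cost of handling the cover.
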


\begin{proof}Any subobject of a decidable object is decidable by Lemma \ref{decidable}(2). We get in particular a restricted singleton map $X\to 2^X$ through which the composite $X\to Y\to 2^Y$ factors precisely when $X$ is complemented in $Y$. By the universal property of the free monoid construction we get a commutative outer rectangle
$$\xymatrix{X^*\ar[r]^{\bar{\kappa}_X}\ar@{->}[d]&\Pp_f(X)\ar[r]^{\iota_X}\ar[d]&2^X\ar[d]\\Y^*\ar[r]_{\bar{\kappa}_Y}&\Pp_f(Y)\ar[r]_{\iota_Y}& 2^Y}$$
admitting horizontal image-factorisations. The outer vertical maps admit compatible retractions inducing a retraction of the middle vertical map. Therefore $\iota_X$ is a retract of $\iota_Y$ so that $\iota_X$ is invertible whenever $\iota_Y$ is invertible, i.e. $X$ is Kuratowski-finite whenever $Y$ is Kuratowski-finite.

Conversely, if $X,Y$ are both decidable Kuratowski-finite, then $\iota_X$ and $\iota_Y$ are both invertible so that $X$, the top global element of $\Pp_f(X)$, gets identified with a global element of $2^Y$, i.e. a complemented subobject of $Y$.\end{proof}

\begin{prp}\label{comparison}In any Grothendieck topos locally finite objects coincide with decidable Kuratowski-finite objects.\end{prp}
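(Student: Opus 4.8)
The plan is to prove the two inclusions separately, exploiting that both notions are \emph{local}, i.e. detected on an epimorphic cover $U=\sum_{i}U_i\onto 1$ through the étale surjection $\phi\colon\Ee/U\to\Ee$, whose inverse image $\phi^*=U^*$ is pullback along $U\to 1$. For the easy direction (locally finite $\Rightarrow$ decidable Kuratowski-finite), note that a constant finite object $\gamma^*(\{1,\dots,n\})$ is decidable by Proposition \ref{finitedecidable} and patently Kuratowski-finite (it is the join of its $n$ global singletons, each complemented by decidability). It therefore suffices to show that decidable Kuratowski-finiteness is reflected by $\phi$. Decidability is reflected by Lemma \ref{surjection}. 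For Kuratowski-finiteness I would use that $U^*\colon\Ee\to\Ee/U$, being the inverse image of an \emph{étale} geometric morphism, is \emph{logical}: it preserves the power object and hence the whole construction $X\mapsto(\kappa_X\colon X^*\to 2^X)$, so that $\kappa_{\phi^*X}$ is identified with $\phi^*(\kappa_X)$. As $\phi$ is surjective its inverse image is faithful, hence reflects epimorphisms, so $\kappa_{\phi^*X}$ epi forces $\kappa_X$ epi. Thus if each $X\times U_i$ is constant finite in $\Ee/U_i$, then $\phi^*X$ is decidable Kuratowski-finite in $\Ee/U$ and so is $X$.

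For the hard direction (decidable Kuratowski-finite $\Rightarrow$ locally finite), suppose $X$ is decidable with $\kappa_X\colon X^*\to 2^X$ epi. Pulling back the top element $\top\colon 1\to 2^X$ (the full subobject $X\into X$) along $\kappa_X$ yields an epimorphism $P\onto 1$ with $P\into X^*=\sum_{n}X^n$. Decomposing $P=\sum_{n}P_n$ with $P_n\into X^n$, the family $(P_n)_{n}$ is a cover of $1$, and over each $P_n$ the $n$ tautological elements $x_1,\dots,x_n\colon P_n\to X$ satisfy $\{x_1\}\vee\dots\vee\{x_n\}=X$. Equivalently, the induced map $\gamma^*(\{1,\dots,n\})\times P_n\onto X\times P_n$ is an epimorphism in $\Ee/P_n$, exhibiting $X\times P_n$ as a quotient of a constant finite object.

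Now $X\times P_n$ is decidable in $\Ee/P_n$, so by Proposition \ref{decidablequotient} the kernel pair $R$ of this epimorphism is complemented in $(\gamma^*(\{1,\dots,n\})\times P_n)^2=\gamma^*(\{1,\dots,n\}^2)\times P_n$. Since $\{1,\dots,n\}^2$ is finite, $2^{\gamma^*(\{1,\dots,n\}^2)}\cong\gamma^*(\mathcal P(\{1,\dots,n\}^2))$ is constant finite, so the complemented subobject $R$ corresponds to a \emph{finite} clopen decomposition $P_n=\sum_{W}P_n^{W}$ indexed by subsets $W\subseteq\{1,\dots,n\}^2$, over which $R$ restricts to the constant relation $\gamma^*(W)\times P_n^{W}$; each nonempty piece forces $W$ to be an equivalence relation on $\{1,\dots,n\}$. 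Refining the cover to the $P_n^{W}$ and computing the quotient (a regular epimorphism is determined by its kernel pair, and $\gamma^*$ preserves the quotient $\{1,\dots,n\}\onto\{1,\dots,n\}/W$) gives $X\times P_n^{W}\cong\gamma^*(\{1,\dots,n\}/W)\times P_n^{W}$, a constant finite object. Hence $(P_n^{W})$ trivialises $X$, so $X$ is locally finite.

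\textbf{Main obstacle.} The delicate points are the two reductions to the constant case. First, one must read the single epimorphism condition ``$\kappa_X$ is epi'' as the geometric statement that $X$ is, on a cover, the image of a standard finite object—this is the unwinding in the second paragraph. Second, one must pass from a merely \emph{complemented} kernel pair to a genuinely \emph{constant} equivalence relation; this is where decidability is essential, via Proposition \ref{decidablequotient}, and where the \emph{finiteness} of $\{1,\dots,n\}$ is used decisively, since it guarantees that only finitely many subsets $W$ occur and hence that the clopen decomposition of the base is finite and the refined family still covers $1$. I expect this second reduction to be the crux.
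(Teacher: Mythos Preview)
Your proof is correct. The forward direction is essentially the paper's argument: both of you observe that the construction $\kappa_X\colon X^*\to 2^X$ is preserved by pulling back along $U\to 1$ and that epimorphisms are detected locally; you phrase this via logicality of the \'etale inverse image, the paper via preservation of $(-)^*$ and $2^{(-)}$ under $-\times U_i$.

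The backward direction takes a genuinely different route. The paper also pulls back $\top$ along $\kappa_X$ to obtain a globally supported $U\subset X^*$ with grading $U=\sum_nU_n$, but then isolates the \emph{least} $n_0$ with $U_{n_0}\neq 0$ and argues that this single piece already has global support and trivialises $X$: using that $U$ is stable under the shift map $s_X\colon X\times X^*\to X^*$, one identifies $X\times U_{n_0}\cong U_{n_0+1}\cong\gamma^*(\{1,\dots,n_0\})\times U_{n_0}$ (an element of $U_{n_0}$ being an injective enumeration of $X$, so that prepending a letter amounts to choosing one of $n_0$ positions to repeat). You instead keep all the $P_n$, invoke Proposition~\ref{decidablequotient} to complement the kernel pair of the tautological epimorphism $\gamma^*(\{1,\dots,n\})\times P_n\onto X\times P_n$, and refine each $P_n$ into finitely many clopen pieces $P_n^W$ over which that kernel pair becomes a constant equivalence relation. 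What the paper's approach buys is a \emph{single} splitting object realised as a complemented subobject of a fixed power $X^{n_0}$; this is exploited later in the proof of Proposition~\ref{finiteGalois} to show that every finite object admits a \emph{finite} splitting object. Your approach avoids the combinatorics of the shift map and the minimality step, trading them for a clean appeal to Proposition~\ref{decidablequotient}, at the cost of producing a countably infinite trivialising cover rather than a single object.
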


\begin{proof}Any locally finite object $X$ is decidable by Proposition \ref{finitedecidable}. For a trivialising cover $(U_i)_{i\in I}$, we get Kuratowski-finite objects $\gamma^*(\{1,\dots,n_i\})\times U_i$ in $\Ee/U_i$. Kuratowski-finiteness of $X$ amounts to the property that $\kappa_X:X^*\to 2^X$ is an epimorphism. To be epimorphic is a local property and, as well the free monoid construction $(-)^*$, as well the complemented subobject classifier $2^{(-)}$, are preserved under $-\times U_i$. Since $\kappa_X$ is $U_i$-locally epimorphic, it is globally epimorphic.

Conversely, let $\kappa_X:X^*\to 2^X$ be epimorphic. This defines a pullback$$\xymatrix{U\ar@{->>}[r]\ar[d]&1\ar[d]^{\top}\\X^*\ar@{->>}[r]_{\kappa_X}&2^X}$$in which $U$ is a globally supported, complemented subobject of $X^*$. Let us then consider the following commutative diagram$$\xymatrix{X\times X^*\ar@{->>}[r]^{1_X\times\kappa_X}\ar[d]_{s_X}&X\times 2^X\ar[d]^{t_X}\\X^*\ar@{->>}[r]_{\kappa_X}& 2^X}$$in which the right vertical map is defined as a composite morphism$$t_X:X\times 2^X\overset{\{\}\times 1_{2^X}}{\longrightarrow} 2^X\times 2^X\overset{\vee}{\longrightarrow} 2^X$$ and the left vertical map is defined via the identifications $$s_X:X\times\sum_{n\geq 0}X^n\cong\sum_{n\geq 0}X^{n+1}\hookrightarrow\sum_{n\geq 0}X^n.$$ Since the top element $\top:1\to 2^X$ is absorbing for binary join $\vee:2^X\times 2^X\to 2^X$, the subobject $U$ of $X^*$ is stable under the map $s_X:X\times X^*\to X^*$.

The free monoid $X^*$ is graded by ``word-length'', and $s_X$ takes the piece $X^n$ of words of length $n$ to the piece $X^{n+1}$ of words of length $n+1$. Pulling back we get an analogous grading of $U$, i.e. $U=\sum_{n\in\NN}U_n$ where $U_n=X^n\times_XU$. The restricted map $(s_X)_{|U}$ takes the piece $U_n$ to the piece $U_{n+1}$.

There is a least integer $n_0\geq 0$ such that $U_{n_0}\not= 0$. We claim that $U_{n_0}$ still has global support since the restriction of $\kappa_X$ to $X^{n_0}$ is still epimorphic. Indeed, if it were not then its image would not contain the top element and $U_{n_0}$ would be void.

Now $U_{n_0}$ is the subobject of $X^{n_0}$ consisting of words in which each letter occurs exactly once, and $U_{n_0+1}$ is the subobject of $X^{n_0+1}$ consisting of those words in which the first letter occurs exactly twice but all other letters occur just once. The map $s_X$ identifies $X\times U_{n_0}$ with $U_{n_0+1}\cong\gamma^*(\{1,\dots,n_0\})\times U_{n_0}$ in $\Ee/U_{n_0}$. Since $U_{n_0}$ has global support, $X$ is locally finite.\end{proof}

\begin{cor}\label{finitestability}Complemented subobjects of finite objects are finite. The image of a morphism between locally finite objects is complemented.\end{cor}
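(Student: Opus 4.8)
The plan is to treat the two assertions separately, drawing on the two characterisations of local finiteness now available.

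For the first assertion, let $Y\into X$ be a complemented subobject of a finite object $X$. I would verify the two defining properties of finiteness for $Y$ in turn. Since $X$ is locally finite, Proposition \ref{comparison} identifies it with a decidable Kuratowski-finite object; Lemma \ref{complemented3} then shows that the complemented subobject $Y$ is itself decidable Kuratowski-finite, hence locally finite again by Proposition \ref{comparison}. For decomposition-finiteness, I would write $X=\sum_{i=1}^n X_i$ as a finite sum of connected objects and intersect with $Y$. By stability of sums we have $Y=\sum_i(Y\cap X_i)$, and each $Y\cap X_i$ is a complemented subobject of the connected object $X_i$, hence equals $0$ or $X_i$. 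Thus $Y$ is a sub-sum of the $X_i$, so a finite sum of connected objects. Taken together, this establishes that $Y$ is finite.

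For the second assertion, the key observation is that any finite object, being a finite sum of connected objects, is in particular locally connected (the summands of a finite sum are connected complemented subobjects). The claim that the image of a morphism between finite objects is complemented is then an immediate instance of Lemma \ref{complemented1}, which already records that the image of a morphism between locally connected objects is complemented.

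I do not expect either part to present a genuine obstacle, since the substantive work has been carried out in the preparatory lemmas. The only point requiring care is to keep the two finiteness conditions apart: local finiteness is controlled purely by the Kuratowski/decidability machinery of Lemma \ref{complemented3}, whereas decomposition-finiteness is a purely connected-components statement handled by the sum-decomposition argument underlying Lemmas \ref{uniqueness} and \ref{complemented1}. Note also that the image statement asserts only complementedness of the image, not its finiteness, so no further count of connected components is needed there.
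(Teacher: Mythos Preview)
Your argument is correct and mirrors the paper's own proof, which is simply ``This follows by combining Lemmas \ref{complemented1} and \ref{complemented3} with Proposition \ref{comparison}.'' You have merely unpacked these citations: Lemma \ref{complemented3} with Proposition \ref{comparison} handles local finiteness of a complemented subobject, the sum-decomposition argument (which is the proof of the first part of Lemma \ref{complemented1}) handles decomposition-finiteness, and the second part of Lemma \ref{complemented1} gives the image statement.
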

\begin{proof}The first assertion follows by combining Lemma \ref{complemented1}, Proposition \ref{comparison} and Lemma \ref{complemented3}. For the second assertion, note first that the quotient of a locally finite object is locally finite, and hence decidable Kuratowski-finite by Proposition \ref{comparison}. We may thus conclude using again Lemma \ref{complemented3}.\end{proof}

We will say that a pretopos is \emph{embedded} if it is a full subcategory of a topos, and the inclusion functor is left exact and right exact.

\begin{prp}\label{pretopos}For any Grothendieck topos with finite terminal object, the full subcategory of finite objects is an embedded pretopos with complemented subobjects.\footnote{It would be an \emph{elementary Boolean topos} if for any finite objects $X,Y$, the internal hom $Y^X$ (so in particular $2^X$) would also be a finite object. We do not know when this is the case.}\end{prp}

\begin{proof}By Lemma \ref{complemented3} the diagonal of a decidable Kuratowski-finite object is complemented. Equalisers of parallel maps between decidable Kuratowski-finite objects are thus decidable Kuratowski-finite. Using Lemma \ref{binary}, Proposition \ref{comparison} and finiteness of the terminal object, it follows that the full subcategory $\Ee_f$ of $\Ee$ spanned by finite objects has all finite limits. Since by Corollary \ref{finitestability} the epi/mono factorisation system of $\Ee$ restricts to $\Ee_f$, the category $\Ee_f$ is exact. Finite sums of finite objects are finite as well, and are stable in $\Ee_f$ under pullback, so that $\Ee_f$ is an extensive category and hence a pretopos. The finite limits and colimits computed in $\Ee_f$ coincide with those computed in $\Ee$, i.e. $\Ee_f$ is an embedded pretopos. Lemma \ref{complemented3} implies that all subobjects in $\Ee_f$ are complemented.\end{proof}

\begin{rmk}Acu\~{n}a-Linton \cite[Theorem 1.1]{AL} establish Proposition \ref{pretopos} for the full subcategory $\Ee_{dKf}$ of decidable Kuratowski-finite objects of a topos $\Ee$. Granting their result, Proposition \ref{pretopos} follows directly from Lemma \ref{complemented1} and Proposition \ref{comparison}.

Proposition \ref{comparison} ties together Kuratowski-finiteness and local finiteness. A closely related intermediate concept are Johnstone's \emph{finite cardinals}, cf. \cite[Section D.5.2]{J}. Proposition \ref{comparison} also follows from \cite[Corollary D.5.2.6]{J} and \cite[Theorem D.5.4.13]{J}. Indeed, it is a consequence of \emph{loc. cit.} that in any elementary topos $\Ee$ with natural number object $\NN_\Ee$, an object $X$ is decidable Kuratowski-finite if and only if there exists a globally supported object $U$ such that $X\times U$ is a finite cardinal in $\Ee/U$.\end{rmk}

The key for Theorem \ref{main1} is Lemma \ref{Leroy} below, due to Leroy (cf. \cite[Lemma 2.4.2]{L}) and recalled here with proof for convenience of the reader. Leroy uses it together with a locally connected analog of Corollary \ref{finitestability} to show that for any locally connected Grothendieck topos $\Ee$, the full subcategory $\Ee_{slc}$ spanned by sums of locally constant objects is an atomic Grothendieck topos, cf. \cite[Theorem 2.4]{L}.

A Grothendieck topos is called \emph{atomic} if it is locally connected and Boolean (cf. Barr-Diaconescu \cite{BD1}). This is the case if and only if each object is a sum of atoms. An \emph{atom} is an object $A$ with precisely two subobjects, $0$ and $A$, cf. Lemma \ref{complemented1}.

A pretopos will be called \emph{atomic} if each object is a sum of atoms. In particular, every subobject is complemented and atoms coincide with non-initial connected objects. Any morphism with non-initial domain and connected codomain is thus an epimorphism. A pretopos will be called \emph{bounded} if it admits a set of generators.

\begin{lma}[Leroy]\label{Leroy}Let $\Pp$ be a bounded atomic pretopos embedded in a Grothen-dieck topos $\Ee$. Assume that $\Pp$ is stable in $\Ee$ under complemented subobject and under image. Then the full subcategory $s\Pp$ spanned by sums of objects of $\Pp$ is an atomic Grothendieck topos. The inclusion $s\Pp\hookrightarrow\Ee$ is the inverse image functor of a connected geometric morphism $\Ee\to s\Pp$.\end{lma}

\begin{proof}By Giraud's criterion it suffices to show that $s\Pp$ has finite limits, pullback-stable and disjoint sums, effective equivalence relations and a set of generators.

By definition $s\Pp$ has arbitrary sums and (using distributivity) finite products. Equalisers in $s\Pp$ can be constructed summandwise so that $s\Pp$ has finite limits, and they are computed in $s\Pp$ the same way as in $\Ee$.  Since in $\Ee$ sums are disjoint and stable they are so in $s\Pp$. For the effectiveness of equivalence relations, it suffices to show that for any equivalence relation $R\subset X\times X$ in $s\Pp$ the quotient $X/R$ (computed in $\Ee$) actually belongs to $s\Pp$. By hypothesis we can write $X$ as a sum $X=\sum_{i\in I}X_i$ of atoms of $\Pp$ so that $X\times X=\sum_{(i,j)\in I^2}X_i\times X_j$. In particular, the equivalence relation $R$ is a sum of objects $R_{ij}=R\cap(X_i\times X_j)$ of $s\Pp$.

We claim that $R_{ij}$ belongs to $\Pp$ so that in particular $R_{ii}\subset X_i\times X_i$ is an equivalence relation in $\Pp$. Indeed, in $s\Pp$ all objects are sums of decidable objects and hence decidable. By Proposition \ref{decidablequotient} the equivalence relation $R$ is thus complemented in $X\times X$. Therefore $R_{ij}$ is complemented in $X_i\times X_j$ and belongs to $\Pp$ because $\Pp$ is stable in $\Ee$ under complemented subobject. Since $\Pp$ is also stable in $\Ee$ under image, the quotient maps $q_i:X_i\to X_i/R_{ii}$ belong to $\Pp$ as well. The total quotient $X/R$ belongs then to $s\Pp$ by the following formula$$X/R=\sum_{[i]\in I/\sim}X_i/R_{ii}$$ where $i\sim j$ in $I$ precisely when $R_{ij}\not= 0$. Indeed, if $X_i\times X_j=0$ then $R_{ij}=0$ and the individual quotients $X_i/R_{ii}$ and $X_j/R_{jj}$ are disjoint subobjects of $X/R$. If $R_{ij}\not=0$ then $X_i\times X_j\not=0$, and the equaliser $R_{ij}$ of $q_1p_1,q_2p_2:X_i\times X_j\rightrightarrows X/R$ maps epimorphically to the atoms $X_i$ and $X_j$. In particular, the quotients $X_i/R_{ii}$ and $X_j/R_{jj}$ coincide in $X/R$.

Finally, a set of generators for $\Pp$ is also a set of generators for $s\Pp$, and every object of $s\Pp$ is a sum of atoms so that $s\Pp$ is an atomic Grothendieck topos.\end{proof}

\begin{thm}\label{main1}For any Grothendieck topos $\Ee$ with finite terminal object, the full subcategory $\Ee_{sf}$ of sums of finite objects is an atomic Grothendieck topos. The inclusion $\Ee_{sf}\hookrightarrow\Ee$ is the inverse image functor of a connected geometric morphism.\end{thm}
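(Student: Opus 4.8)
The plan is to apply Leroy's Lemma \ref{Leroy} to the embedded pretopos $\Pp=\Ee_f$ of finite objects and then to recognise the resulting Grothendieck topos $s\Ee_f=\Ee_{sf}$ as an atomic one. First I would check the three hypotheses of Lemma \ref{Leroy}. That $\Ee_f$ is an embedded pretopos is exactly Proposition \ref{pretopos}. That each finite object is a sum of connected objects is immediate from decomposition-finiteness. Stability of $\Ee_f$ under complemented subobjects and under images is Corollary \ref{finitestability}: a complemented subobject of a finite object is finite, and the image of a map between finite objects is a complemented, hence finite, subobject of its finite codomain. Lemma \ref{Leroy} then delivers at once that $\Ee_{sf}$ is a Grothendieck topos and that its inclusion into $\Ee$ is the inverse image functor of a connected geometric morphism $f:\Ee\to\Ee_{sf}$, which already settles the second assertion of the theorem.

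It remains to prove atomicity, and here I would use the characterisation of atomic Grothendieck toposes as precisely those in which every object is a sum of atoms, an atom being an object whose only subobjects are $0$ and itself. The two things to establish are that $\Ee_{sf}$ is Boolean and that its connected finite objects are the atoms. Since $f^*$ is fully faithful it is faithful, so $f$ is a surjection and Lemma \ref{surjection} applies to it. Now every object of $\Ee_{sf}$ is a sum of finite objects; finite objects are locally finite, hence decidable (Proposition \ref{comparison}, or Proposition \ref{finitedecidable} since a locally finite object is locally constant), and sums of decidable objects are decidable, as recorded in the proof of Proposition \ref{finitedecidable}. Thus every object of $\Ee_{sf}$ is decidable when viewed in $\Ee$, and since $f$ is surjective Lemma \ref{surjection} reflects this to decidability inside $\Ee_{sf}$. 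By Lemma \ref{Boolean} the topos $\Ee_{sf}$ is therefore Boolean.

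Next I would identify the atoms. Every object of $\Ee_{sf}$ is a sum of finite objects, each a finite sum of connected finite objects, so every object is a sum of connected finite objects. I claim each such connected $C$ is an atom of $\Ee_{sf}$. Indeed, since $\Ee_{sf}$ is Boolean every subobject of $C$ in $\Ee_{sf}$ is complemented there; as $f^*$ is an inverse image functor it preserves complemented subobjects, so the subobject is complemented in $\Ee$ as well; but $C$ is connected in $\Ee$, whence its only complemented subobjects in $\Ee$ are $0$ and $C$. Hence $C$ has no subobjects in $\Ee_{sf}$ other than $0$ and $C$, i.e. it is an atom. Every object of $\Ee_{sf}$ being a sum of such atoms, $\Ee_{sf}$ is atomic.

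The genuine mathematical weight of the statement is carried by Leroy's Lemma \ref{Leroy} and by Proposition \ref{comparison}, both available to us, so what remains is essentially bookkeeping around the Boolean/atomic characterisation. The one point demanding care — and the place I expect to be the main obstacle in writing a clean argument — is the transfer of decidability and of complementation back and forth between $\Ee$ and $\Ee_{sf}$; this is precisely where surjectivity of $f$ together with Lemma \ref{surjection} does the work, both in establishing Booleanness (via reflection of decidability) and in pinning down the atoms (via preservation of complemented subobjects).
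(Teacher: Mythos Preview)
Your proposal is correct and follows essentially the same route as the paper: verify the hypotheses of Leroy's Lemma via Proposition~\ref{pretopos} and Corollary~\ref{finitestability}, then deduce Booleanness from decidability of all objects, and conclude atomicity. The only cosmetic difference is that the paper infers ``atomic'' directly from ``Boolean plus locally connected'', whereas you spell out that the connected finite objects are the atoms; also, your detour through surjectivity and Lemma~\ref{surjection} to transfer decidability is not needed, since the fully faithful inclusion $\Ee_{sf}\hookrightarrow\Ee$ already creates finite limits and finite sums, so decidability in $\Ee_{sf}$ and in $\Ee$ coincide on the nose.
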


\begin{proof}Since $\Ee_f$ is an atomic pretopos by Proposition \ref{pretopos} (each object is a sum of finite connected objects, i.e. atoms) and satisfies the required stability properties by Corollary \ref{finitestability}, Leroy's Lemma \ref{Leroy} implies that $\Ee_{sf}$ is an atomic Grothendieck topos once we have shown that $\Ee_f$ is bounded. Actually, $\Ee_f$ is even essentially small. Indeed, any finite object is a finite sum of atoms, and any atom is a quotient of a generator of a given generating set $\Ee_0$ of $\Ee$. Therefore, any finite object is a quotient of a finite sum of generators of $\Ee_0$. Since in any Grothendieck topos, the quotients of an object form a set, this shows that $\Ee_f$ is essentially small.\end{proof}

\begin{dfn}\label{fingen}A Grothendieck topos $\Ee$ is said to be \emph{finitely generated} if $\Ee$ is generated by its finite objects or, equivalently, if $\Ee=\Ee_{sf}$.\end{dfn}

An object $X$ of a Grothendieck topos $\Ee$ is called \emph{compact} (resp. \emph{quasi-separated}) if any cover $(U_i)_{i\in I}$ of $X$ admits a \emph{finite} subcover (resp. if in $\Ee/X$ binary products of compact objects are compact). An object is called \emph{coherent} if it is compact and quasi-separated. A \emph{locally coherent topos} is a Grothendieck topos with a generating set of coherent objects. A \emph{coherent topos} is a locally coherent topos with coherent terminal object.\footnote{Isbell's duality \cite{Isbell} between the category of spatial frames and the category of sober spaces restricts to a duality between the category of coherent frames and the category of coherent spaces (aka spectral spaces), cf. Johnstone \cite{JStone}. The topos of sheaves on a scheme is locally coherent because the Zariski spectrum of a commutative ring is a coherent space. This explains the emphasis on coherent toposes by Grothendieck and Verdier \cite{SGA4}, and much of their terminology.} Locally coherent toposes have enough points by Deligne's Theorem \cite{SGA4}. Note that there are several equivalent ways of defining (locally) coherent toposes, cf. Johnstone \cite[Theorem D.3.3.1]{J} and Lurie \cite[Proposition C.6.4]{Lurie}.

Grothendieck and Verdier \cite[D\'efinition 2.10]{SGA4} single out an interesting subclass of coherent toposes: a \emph{Noetherian topos} is a coherent topos such that every subobject of a coherent object is coherent. Coherent objects of a Noetherian topos have stationary ascending chains of subobjects. For instance, the topos of sheaves on the Zariski spectrum of a commutative Noetherian ring is a Noetherian topos.

\begin{prp}\label{coherent}Any finitely generated Grothendieck topos is locally coherent, and its finite objects are precisely its coherent objects. If in addition the terminal object is finite then it is a coherent topos, even a Noetherian topos.\end{prp}

\begin{proof}In a finitely generated Grothendieck topos, finite objects are finite sums of atoms by Theorem \ref{main1}. Since atoms are compact, finite objects are compact. Since subobjects of finite objects are complemented, and binary products of finite objects are finite, any finite object is also quasi-separated and hence coherent. Conversely, coherent objects are finite sums of locally finite atoms, hence decomposition-finite and locally finite. The second assertion follows now from Corollary \ref{finitestability}.\end{proof}

\begin{rmk}\label{proper}For each coherent topos $\Ee$,  the full subcategory $\Ee_{coh}$ spanned by coherent objects is an essentially small \emph{pretopos}. The coherent topos $\Ee$ is equivalent to the category of sheaves on the site $(\Ee_{coh},J_{coh})$ where $J_{coh}$ is the coherent topology, cf. Grothendieck-Verdier \cite[Exercice 3.11]{SGA4}, Johnstone \cite[Theorem D.3.3.7]{J} and Lurie \cite[Theorem C.6.5]{Lurie}. This sheaf representation extends to a categorical duality between pretoposes and coherent Grothendieck toposes, cf. Makkai \cite{Ma}.\end{rmk}

We now relate finite generation to \emph{compactness} and \emph{separation}. We call a Grothendieck topos $\Ee$ \emph{compact} if its terminal object is compact. An object $X$ of $\Ee$ is thus compact precisely when the slice topos $\Ee/X$ is compact. Any family of morphisms with common codomain $X$ induces by image-factorisation a family of subobjects of $X$. The former family is epimorphic if and only if the latter family is. An object $X$ is thus compact if and only the frame of subobjects of $X$ is.

A Grothendieck topos $\Ee$ is compact if and only if the frame $\omega_\Ee=\gamma_*(\Omega_\Ee)$ of global sections of the subobject classifier is compact. This is the case if and only if $\gamma:\Ee\to\Ss$ is a \emph{proper} geometric morphism in the sense of Moerdijk-Vermeulen \cite{MV} and Johnstone \cite[Section C.3.2]{J}.

A Grothendieck topos $\Ee$ with proper diagonal $\Ee\to\Ee\times_\Ss\Ee$ is called \emph{separated}. This implies that for any two objects $X,Y$ of $\Ee$, the induced geometric morphism $\Ee/(X\times Y)\to(\Ee\times_\Ss\Ee)/(X,Y)$ is proper so that binary products of compact objects are compact, i.e. the topos $\Ee$ is \emph{quasi-separated}.

It may be difficult to check whether a Grothendieck topos $\Ee$ is separated, because the diagonal $\Ee\to\Ee\times_\Ss\Ee$ is an embedding of toposes only when $\Ee$ is localic,  cf. Johnstone \cite[Proposition B.3.3.8]{J}. For instance, a coherent topos is quasi-separated by definition, yet in general not separated without further assumptions.


A coherent topos $\Ee$ is called \emph{perfect} (cf. \cite[D\'efinition VI.2.9.1]{SGA4}) if every coherent object $X$ is \emph{finitely presentable} (i.e. $\Ee(X,-)$ preserves filtered colimits) so that $\Ee$ is locally finitely presentable. This implies that in $\Ee$ coherent objects coincide with finitely presentable objects, cf. \cite[Corollaire VI.1.2.24]{SGA4}. In particular, if a geometric morphism of perfect toposes  $\Ee\to\Ff$ is induced by a morphism of sites $\rho:\Ff_{coh}\to\Ee_{coh}$ preserving filtered colimits then the direct image functor $\Ee\to\Ff$ (i.e. precomposition with $\rho$) preserves coherent objects.

Note that any Noetherian topos is perfect, cf. \cite[Section VI.2.14]{SGA4}.

\begin{lma}\label{perfect}A perfect topos with coherent subobject classifier is separated.\end{lma}

\begin{proof}If $\Ee$ is perfect then so is $\Ee\times_\Ss\Ee$. The diagonal $\delta_*:\Ee\to\Ee\times_\Ss\Ee$ is induced by a site comorphism $\delta:\Ee_{coh}\to\Ee_{coh}\times\Ee_{coh}:X\mapsto(X,X)$ whose right adjoint $\rho:\Ee_{coh}\times\Ee_{coh}\to\Ee_{coh}:(X,Y)\mapsto X\times Y$ is a site morphism preserving filtered colimits. The direct image functor preserves thus coherent objects so that the direct image $\delta_*(\Omega_\Ee)$ of the coherent subobject classifier $\Omega_\Ee$ is a coherent frame in $\Ee\times_\Ss\Ee$. Therefore, the diagonal of $\Ee$ is proper and $\Ee$ is separated.\end{proof}

\begin{prp}\label{separated}A coherent topos with locally finite coherent objects is separated.\end{prp}

\begin{proof}Since coherent objects are decomposition-finite, locally finite coherent objects are finite objects and our category $\Ee$ is finitely generated with finite terminal object. It follows then from Theorem \ref{main1} and Proposition \ref{coherent} that $\Ee$ is Boolean and Noetherian thus perfect with $\Ee_f=\Ee_{coh}$. The subobject classifier $\Omega_\Ee=1_\Ee+1_\Ee$ (cf. Lemma \ref{Boolean}) is finite thus coherent, so that $\Ee$ is separated by Lemma \ref{perfect}.\end{proof}

\begin{thm}[cf. Corollary IV.4.8 of Moerdijk-Vermeulen \cite{MV}]\label{classified}For a \emph{connected} Grothendieck topos $\Ee$ the following five conditions are equivalent:
\begin{enumerate}
\item[(1)]$\Ee$ is finitely generated;
\item[(2)]$\Ee$ is coherent with locally finite coherent objects;
\item[(3)]$\Ee$ is coherent and separated;
\item[(4)]$\Ee$ is pointed, hyperconnected and separated;
\item[(5)]$\Ee$ is equivalent to the classifying topos of a profinite group.
\end{enumerate}\end{thm}

\begin{proof}(1)$\implies$(2). This follows from Theorem \ref{main1} and Proposition \ref{coherent}.

(2)$\implies$(3). This is Proposition \ref{separated}.

(3)$\implies$(4). Let $\Ee\to\Sh(\omega_\Ee)\to\Ss$ be the hyperconnected-localic factorisation \cite{J0} of the global section functor. Since $\Ee$ is coherent and separated, the frame $\omega_\Ee$ is coherent and regular, i.e. the frame of open subsets of a Stone space, cf. \cite{Isbell,JStone}. Therefore, connectedness of $\Ee$ implies that the underlying space is a point so that $\Ee$ is hyperconnected. By Deligne's theorem $\Ee$ has a point, cf. \cite[Section VI.9]{SGA4}.

(4)$\implies$(5). Using Joyal-Tierney's Representation Theorem \cite{JT} this is proved by Moerdijk-Vermeulen \cite[Theorem II.3.1]{MV} and Johnstone \cite[Remark C.5.3.14b]{J}.

(5)$\implies$(1). The classifying topos $\BB G$ of a profinite group $G$ is the topos of continuous $G$-sets (cf. Definition \ref{profinitegroup}) and lies as such coreflective in the presheaf topos $\Ss^G$ of all $G$-sets. The coreflection $\Ss^G\to\BB G$ is a connected geometric morphism preserving finite sums and finite objects. The finite objects of $\Ss^G$ are precisely the finite $G$-sets. Their images in $\BB G$ generate $\BB G$ since any continuous $G$-set is a sum of finite transitive $G$-sets. Therefore $\BB G$ is finitely generated.\end{proof}

\begin{rmk}\label{nonconnected}In Theorem \ref{main2} we establish implication (1)$\implies$(5) without reference to Joyal-Tierney's Representation Theorem \cite{JT} by construction of a Galois point whose profinite automophism group has the required classifying property.

The equivalence $(1)\!\!\iff\!\!(4)$ is consistent with \cite[Theorem 4.6]{H} where Henry characterises Grothendieck toposes generated by decidable Kuratowski-finite objects without assuming them to be decomposition-finite. In the connected case the difference is comparatively small, cf. Lemma \ref{locallyfinite} below.

Let us mention two classes of Grothendieck toposes which \emph{strictly} contain those appearing in Theorem \ref{classified}:

Any connected, atomic, coherent Grothendieck topos is the classifying topos of a \emph{coherent} topological group in the sense of Johnstone, cf. \cite[Theorem D.3.4.3]{J}, the latter being in general neither profinite nor even prodiscrete, see Remark \ref{BhattScholze}. An example of a non-separated atomic and coherent topos is the \emph{Myhill-Schanuel topos} of nominal sets, i.e. the classifying topos $\BB\Sgm_\infty$ of the group of bijections of a countable set, equipped with the topology of pointwise convergence. In $\BB\Sgm_\infty$, coherent and decomposition-finite objects coincide so that $\BB\Sgm_\infty$ is not Noetherian.

On the other hand, there are many connected Noetherian toposes which are not atomic. Indeed, for an irreducible commutative Noetherian ring $R$, the coherent topos $\Sh(\Spec(R))$ is atomic precisely when $R$ is a field.

One can ask how much of Theorem \ref{classified} ``survives'' \emph{without} connectedness. A complete answer is beyond the scope of this article. Let us merely \emph{conjecture} here that for a general Grothendieck topos $\Ee$ the following five conditions are equivalent:
\begin{enumerate}

\item[(1)]$\Ee$ is finitely generated;
\item[(2)]$\Ee$ is locally coherent with locally finite coherent objects;
\item[(3)]$\Ee$ is locally coherent and separated;
\item[(4)]$\Ee$ has enough points, is locally hyperconnected and separated;
\item[(5)]$\Ee$ is equivalent to the classifying topos of a profinite groupoid.

\end{enumerate}

\noindent where $\Ee$ is called \emph{locally hyperconnected} if $\Ee$ is generated by objects $A$ whose slice topos $\Ee/A$ is hyperconnected, and a groupoid $G=(G_1\rightrightarrows G_0)$ is called \emph{profinite} if it is a groupoid in Stone spaces with proper source/target map $G_1\to G_0\times G_0$.

The implications (1)$\implies$(2)$\implies$(3)$\implies$(4) and (5)$\implies$(1) are established like above using that Proposition \ref{separated} extends to locally coherent toposes. 
It is however unclear to us how to establish the missing implication (4)$\implies$(5).
\end{rmk}

\section{Galois points and Galois sites}This section is central. We set up a correspondence between connected, finitely generated Grothendieck toposes and Galois categories. This is remarkable from a historical perspective insofar as Galois categories have been defined by Grothendieck \cite{SGA1} quite a bit earlier than toposes in order to axiomatise \emph{finitary covering theory}.

We proceed in three steps. We first show that any finitely generated Grothendieck topos $\Ee$ is generated by \emph{finite Galois objects} refining the well-known generation of Galois toposes\footnote{By definition, a \emph{Galois topos} is a connected, locally connected Grothendieck topos generated by locally constant objects. Theorems \ref{main1} and \ref{classified} imply that connected, finitely generated Grothendieck toposes are precisely \emph{coherent Galois toposes} illustrating again their importance.} by Galois objects, cf. \cite{L,BD2,M,B}. We then show that for a connected Grothendieck topos $\Ee$ the embedded pretopos $\Ee_f$ admits a fibre functor $\Ee_f\to\Ss_f$ endowing $\Ee_f$ with the structure of a \emph{Galois category}. The fibre functor is finally shown to induce a \emph{Galois point} $p_\Ee:\Ss=\Ss_{sf}\to\Ee_{sf}=\Ee$ whose automorphism group $\Aut(p_\Ee)$ carries a profinite topology such that $\Ee\simeq\BB\Aut(p_\Ee)$.

This is an infinitary version of Grothendieck's Representation Theorem for Galois categories \cite[Theorem 4.1]{SGA1} because of the equivalence $\Ee_f\simeq\BB\Aut(p_\Ee)_f$. Our approach is more constructive than Theorem \ref{classified} insofar as the Galois point is made explicit by Galois-theoretical methods without referring to Deligne's Theorem.

We define \emph{Galois points} to be those which become ``canonical'' when composed with finite Galois quotients. Any two Galois points of a connected, finitely generated Grothendieck topos are isomorphic, cf. Proposition \ref{Galoispoint} so that the profinite fundamental group of a connected, finitely generated Grothendieck toposes does not depend on the choice of Galois point. Galois points are thus better behaved than general points of connected, atomic Grothendieck toposes, which have Morita-equivalent localic automorphism groups but which usually are not isomorphic.

Recall that a \emph{Galois object} is a connected, globally supported object $A$ such that the canonical right action $\rho_A:A\times\gamma^*(\Aut(A))\to A$ induces an isomorphism $(p_1,\rho_A):A\times\gamma^*(\Aut(A))\to A\times A$. An object $X$ of $\Ee$ is said to be \emph{split} by $A$ if $A\times X$ is constant in $\Ee/A$ and we denote by $\Spl(A)$ the full subcategory of $\Ee$ consisting of objects split by $A$. It is well-known that $\Spl(A)$ is equivalent to the topos $\BB\Aut(A)$ of left $\Aut(A)$-sets. The equivalence is induced by a geometric morphism $\Ee\to\BB\Aut(A)$ whose inverse image functor assigns to an $\Aut(A)$-set $M$ the object $A\times_{\gamma^*(\Aut(A))}\gamma^*(M)$ of $\Ee$, see for instance \cite[Section 2.3.6]{L}.

\begin{prp}\label{finiteGalois}Any finitely generated Grothendieck topos is generated by finite Galois objects.\end{prp}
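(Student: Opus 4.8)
The plan is to reduce the statement to a single ``Galois closure'' construction. Since $\Ee$ is finitely generated we have $\Ee=\Ee_{sf}$, so every object is a sum of finite objects, and every finite object is a finite sum of connected finite objects; hence the connected finite objects form a generating family. It therefore suffices to produce, for each nonzero connected finite object $X$, a finite Galois object $A$ together with an epimorphism $A\onto X$. Replacing each connected summand of an arbitrary object by such a covering object, and using that a sum of epimorphisms is an epimorphism, then exhibits every object of $\Ee$ as a quotient of a sum of finite Galois objects.

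First I would fix the \emph{degree} of $X$. Being nonzero and connected in a connected topos, $X$ is globally supported. Since $X$ is locally finite, for each $m$ the object $X^{(m)}\into X^m$ of ``injective $m$-tuples'' is a complemented subobject (the $X^{(m)}$ are well defined because $X$ is decidable by Proposition \ref{finitedecidable}, so the complemented subobjects of $X^m$ form a Boolean algebra and I may take the complement of the union of the pairwise diagonals). Its support is a complemented subobject of $1$ by Corollary \ref{finitestability}, so connectedness of $1$ forces a single global value $n$ with $X\times U\cong\gamma^*(\{1,\dots,n\})\times U$ locally. By Lemma \ref{binary} and Corollary \ref{finitestability} the object $X^{(n)}$ is again finite, and locally it is the constant object on the bijections $\Sgm_n$, hence globally supported of degree $n!$. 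I then choose a connected component $A$ of $X^{(n)}$, which is a connected, globally supported, finite object.

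Next I would check that $A$ splits $X$ and dominates it. The $n$ coordinate projections restrict to maps $\sigma_1,\dots,\sigma_n\colon A\to X$; injectivity of the tuples forces these to be pairwise disjoint, so the corresponding sections of $X\times A\to A$ have complemented, disjoint images whose union has full local degree $n$, giving $X\times A\cong\gamma^*(\{1,\dots,n\})\times A$ in $\Ee/A$. Thus $A$ splits $X$, hence splits $X^n$ and its complemented subobject $X^{(n)}$, and finally splits its own summand $A$; in particular $A\times A$ is constant of degree $d=\deg(A)$ over $A$ via $p_1$. Moreover each $\sigma_i\colon A\to X$ is a map between connected finite objects, so by Corollary \ref{finitestability} it is an epimorphism, providing the desired covering $A\onto X$.

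The heart of the argument, and the step I expect to be the main obstacle, is that $A$ is Galois. Because $A$ is connected and $A\times A\cong\gamma^*(\{1,\dots,d\})\times A$ over $A$, the projection $p_1\colon A\times A\to A$ has exactly $d$ sections, each given by a morphism $g\colon A\to A$. Each such $g$ is an epimorphism (its image is complemented and nonzero in the connected $A$) and, being locally a surjection between two $d$-element constant objects, is locally bijective; since an isomorphism can be detected on a cover, this forces $g\in\Aut(A)$. Conversely every automorphism yields one of these $d$ sections, so $\Aut(A)$ is finite of order $d$ and the $d$ sections are precisely the translates of the diagonal under the action $\rho_A$. This identifies $(p_1,\rho_A)\colon A\times\gamma^*(\Aut(A))\to A\times A$ with the decomposition of $A\times A$ into its $d$ components, hence it is an isomorphism and $A$ is a finite Galois object. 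The delicate points to get right are the passage from ``locally bijective'' to ``isomorphism'' and the bookkeeping that matches the $d$ sections with the automorphisms under the canonical action.
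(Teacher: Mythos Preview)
Your argument is correct, and it takes a genuinely more elementary route than the paper's. Both proofs start from the same splitting object: your $X^{(n)}$ is exactly the object $U_{n_0}$ constructed in the proof of Proposition~\ref{comparison} (the ``words in which each letter occurs exactly once''), and both pass to a connected component. From there the arguments diverge. The paper invokes Barr--Diaconescu \cite[Theorems~3 and~4]{BD2} to obtain a Galois object $A=\phi_!(U)$ with $\Spl(A)=\Spl(U)$, and then appeals to Booleanness (Theorem~\ref{main1}) to conclude that the unit $U\to\phi^*\phi_!(U)$ is epic, so that $A$ is a decidable quotient of $U$ and hence finite. You instead prove directly that the connected component $A$ of $X^{(n)}$ is \emph{already} Galois, by comparing the two splittings of $A\times A$ over $A$ (via $p_1$ and via $p_2$): the connected components of $A\times A$ are the graphs of the $d$ endomorphisms arising from sections of $p_1$, but also the ``cographs'' from sections of $p_2$, and matching these forces every such endomorphism to be invertible. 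This bypasses the external reference entirely; in effect you are verifying the special case of \cite[Theorem~4]{BD2} that is needed here. What the paper's route buys is brevity and a pointer to the general machinery; what yours buys is self-containment and an explicit identification of the Galois closure with a connected component of $X^{(n)}$.

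Two small points worth tightening. First, your assertion that the connected component $A$ is globally supported deserves a word: since $A$ is finite and $1$ is finite, the image of $A\to 1$ is complemented by Corollary~\ref{finitestability}, hence equals $1$ by connectedness of $\Ee$. Second, your ``locally bijective'' step is cleaner if phrased via the two projections as above, or equivalently by noting that $g\times 1_A$ is an epimorphism between copies of $d\cdot A$ over the connected $A$, hence corresponds to a surjective self-map of $\{1,\dots,d\}$, hence is an isomorphism which descends along the epimorphism $A\to 1$.
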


\begin{proof}In the proof of Proposition \ref{comparison} we constructed a splitting object $U$ for any finite object $X$ of $\Ee$ as a complemented subobject of $X^n$ for convenient $n$. By Lemma \ref{binary} and Corollary \ref{finitestability} the splitting object $U$ is finite. Without loss of generality $U$ can be replaced with a connected component of $U$. The inclusion $\Spl(U)\hookrightarrow\Ee$ is then the inverse image functor $\phi^*$ of an \emph{essential} geometric morphism $\phi:\Ee\to\Spl(U)$ by \cite[Theorem 3]{BD2}. A Galois object $A$ with $\Spl(A)=\Spl(U)$ can be obtained as $\phi_!(U)$ by \cite[Theorem 4]{BD2}. Since finitely generated Grothendieck toposes are atomic by Theorem \ref{main1}, the reflection $\phi_!:\Ee\to\Spl(U)$ is an \emph{epireflection}, i.e. the unit $U\to\phi^*\phi_!(U)$ is epimorphic so that $A$ is a decidable quotient of $U$, thus a finite object by Corollary \ref{finitestability}.\end{proof}

\begin{sct}\label{Galoissection}{\bf Galois coverings and Galois points.} Since any finite object $X$ is split by the above constructed Galois object $A$, the equivalence $\Spl(A)\simeq\BB\Aut(A)$ yields a one-to-one correspondence between $\Ee$-epimorphisms $A\twoheadrightarrow X$ and $\Aut(A)$-epimorphisms $\Aut(A)\onto M$ where $M$ is a fixed $\Aut(A)$-set $M$ such that $X\cong A\times_{\gamma^*(\Aut(A))}\gamma^*(M)$. If $X$ is connected, then the corresponding $\Aut(A)$-set $M$ is transitive, i.e. an orbit. Note that $M$ is pointed by the image of the identity of $A$.

An epimorphism $A\onto X$ is said to be a \emph{Galois covering} of $X$ whenever any epimorphism $\tilde{A}\onto X$ with Galois object $\tilde{A}$ splitting $X$ factors through $A\onto X$. Any finite connected object $X$ has Galois coverings, cf. Leroy \cite[Proposition 2.4.6]{L} and Moerdijk \cite[proof of Theorem 3.2(3)$\Rightarrow$(1)]{M}. Moreover, any two Galois coverings of $X$ are isomorphic in $\Ee/X$.

Consequently, by Proposition \ref{finiteGalois}, any atom $X$ of $\Ee$ may be represented by an essentially unique pair $(A\onto X, M)$ consisting of a finite Galois covering $A\onto X$ together with a pointed, transitive $\Aut(A)$-set $M$. The pointed $\Aut(A)$-set $M$ can be realised inside $\Ee$ as the morphism-set $\Ee(A,X)$, pointed by the choice of a Galois covering $A\onto X$, with $\Aut(A)$ acting from the right on the domain. We shall denote $\Aut_X(A)$ the subgroup of $\Aut(A)$ fixing the Galois covering $A\onto X$. In particular, we get pointed $\Aut(A)$-isomorphisms $\Aut(A)/\Aut_X(A)\cong\Ee(A,X)\cong M.$

There is a more topos-theoretic description of the situation: as already mentioned, for each finite Galois object $A$, we have an equivalence $\Spl(A)\simeq\BB\Aut(A)$. The full inclusion $\Spl(A)\hookrightarrow\Ee$ is the inverse image functor of a connected geometric morphism $q_A:\Ee\to\BB\Aut(A)$. The existence of a Galois covering $A\onto X$ amounts to the existence of a ``minimal'' quotient $q_A:\Ee\to\BB\Aut(A)$ ``containing'' $X$.

It is crucial that $\BB\Aut(A)$ has a \emph{canonical} essential point $p_A:\Ss\to\BB\Aut(A)$ whose inverse image functor $(p_A)^*:\BB\Aut(A)\to\Ss$ is the \emph{forgetful functor}. The right adjoint $(p_A)_*:\Ss\to\BB\Aut(A)$ assigns to a set the same set endowed with trivial $\Aut(A)$-action, while the left adjoint $(p_A)_!:\Ss\to\BB\Aut(A)$ assigns to a set the freely generated $\Aut(A)$-set. The forgetful functor $(p_A)^*$ is thus monadic and the canonical point $p_A:\Ss\to\BB\Aut(A)$ is an essential surjection. The canonical point is up to isomorphism characterised by being essential and surjective.\end{sct}

Throughout $\Ee$ denotes a connected, finitely generated Grothendieck topos.

\begin{dfn}A \emph{Galois point} of $\,\Ee$ is a point $x_\Ee:\Ss\to\Ee$ such that there is an isomorphism of points $q_A \circ x_\Ee\cong p_A$ for each finite Galois object $A$ of $\Ee$.\end{dfn}

\begin{lma}\label{Galois0}Let $X,Y$ be atoms of $\Ee$ with Galois coverings $A_X\onto X,A_Y\onto Y$ represented by pointed, transitive $\Aut(A_X),\Aut(A_Y)$-sets $M,N$.

Any epimorphism $f:X\onto Y$ determines a pair $(\alpha_f,\beta_f)$ consisting of a group homomorphism $\alpha_f:\Aut(A_X)\onto\Aut(A_Y)$ and an equivariant surjection $\beta_f:M\onto N$ such that $(\alpha_f)_!(M)=N$. The surjection $\beta_f$ is uniquely determined by $f$ while the group homomorphism $\alpha_f$ is unique up to automorphism in $\Aut_Y(A_Y)$.\end{lma}

\begin{proof}We begin by observing that any undotted diagram
$$\xymatrix{A_X\ar@{.>>}[r]^{\bar{f}}\ar@{->>}[d]&A_Y\ar@{->>}[d]\\X\ar@{->>}[r]_f&Y}$$
admits a lifting $\bar{f}:A_X\to A_Y$ rendering commutative the diagram because $A_Y\onto Y$ is a Galois covering. Two such liftings differ by an automorphism of $A_Y$ because $A_Y$ is a Galois object. This automorphism actually belongs to $\Aut_Y(A_Y)$ because $\bar{f}$ is epic and the diagram commutes.

For any $\alpha\in\Aut(A_X)$ there is a unique $\beta\in\Aut(A_Y)$ such that $\bar{f}\alpha=\beta\bar{f}$ so that we get a group homomorphism $\alpha_f:\Aut(A_X)\onto\Aut(A_Y)$. The commutativity of the diagram implies that $\alpha_f$ takes $\Aut_X(A)$ into $\Aut_Y(B)$ yielding thus an equivariant mapping $\beta_f:M\onto N$. The latter is surjective because $M$ and $N$ are transitive. By construction the pair $(\alpha_f,\beta_f)$ induces the given epimorphism $f:X\onto Y$ which amounts to the identity $(\alpha_f)_!(M)=N$.\end{proof}

\begin{prp}\label{Galoispoint}Any two Galois points of a connected, finitely generated Grothen-dieck topos are isomorphic.\end{prp}

\begin{proof}For any Galois point $x_\Ee:\Ss\to\Ee$ we fix an isomorphism of fibre functors $(p_A)^*\cong(q_A\circ x_\Ee)^*$. This yields a natural transformation $Id_\Ss\to (x_\Ee)^*(q_A)^*(p_A)_!$. We denote its image of a singleton by $x_A\in x_\Ee^*(q_A)^*(\Aut(A))=(x_\Ee)^*(A)$.

Since $A$ is a Galois object we get a pointed $\Aut(A)$-torsor $(x_\Ee^*(A),x_A)$ in sets. Any two pointed $\Aut(A)$-torsors are canonically $\Aut(A)$-isomorphic so that for any two Galois points $x^*_\Ee,\tilde{x}^*_\Ee$ we get a canonical bijection $x^*_\Ee(A)\cong \tilde{x}^*_\Ee(A)$.

For $f:A\to B$ in $\Ee$ there is a unique $\gamma_p(f)\in\Aut(B)$ such that $$\gamma_p(f)(p^*_\Ee(f)(x_A))=x_B.$$ For composable morphisms $A\overset{f}{\longrightarrow}B\overset{g}{\longrightarrow}C$ we have $\alpha_g(\gamma_p(f))=\gamma_p(gf)$ where the group epimorphism $\alpha_g:\Aut(B)\to\Aut(C)$ has been constructed in Lemma \ref{Galois0}. Since for distinct Galois points $x_\Ee,\tilde{x}_\Ee$ we have $\gamma_p(f)=\gamma_{\tilde{p}}(f)$, the above constructed bijection $x^*_\Ee(A)\cong\tilde{x}^*_\Ee(A)$ is natural in finite Galois objects $A$.

For any atom $X$ in $\Ee$ with Galois covering $A\onto X$ we have a commutative square$$\xymatrix{x_\Ee^*(A)\ar[r]^\cong\ar@{->>}[d]&\tilde{x}^*_\Ee(A)\ar@{->>}[d]\\x_\Ee^*(X)\ar[r]_\cong&\tilde{x}^*_\Ee(X)}$$in which the lower bijection is uniquely determined by the upper bijection. It follows then from Lemma \ref{Galois0} that the fibre functors $x_\Ee^*$ and $\tilde{x}_\Ee^{*}$ are isomorphic on the full subcategory $\Ee_{at}$ spanned by atoms. Since Theorem \ref{main1} yields the identifications $s\Ee_{at}=\Ee_{sf}=\Ee$, the fibre functors are globally isomorphic.\end{proof}



We now aim at giving a site characterisation for connected, finitely generated Grothendieck toposes $\Ee$ along with an explicit construction of Galois points.\vspace{1ex}

The \emph{finite sum completion} $\Cc^\oplus$ of a category $\Cc$ has as objects pairs $(I,(A_i)_{i\in I})$ consisting of a finite set $I$ and a family of objects of $\Cc$ indexed by $I$. The morphisms $(I,A_i)\to (J,B_j)$ are pairs $(\phi,f_i)$ consisting of a mapping $\phi:I\to J$ and morphisms $f_i:A_i\to B_{\phi(i)}$ in $\Cc$, with the obvious composition law. If $\Cc$ already has finite sums, then there is a canonical functor $\Cc^\oplus\to\Cc:(I,A_i)\mapsto \bigoplus_{i\in I}A_i$. The category $\Cc^\oplus$ has finite sums so that we get a functor $\mu_\Cc:(\Cc^\oplus)^\oplus\to\Cc^\oplus$ as well as an inclusion functor $\eta_\Cc:\Cc\to\Cc^\oplus$. The triple $((-)^\oplus,\mu,\eta)$ constitutes a monad on the category of small categories whose algebras are precisely categories with finite sums. Johnstone uses a similar construction to ``positivize'' any coherent category, cf. \cite[Proposition A.1.4.5]{J}.

\begin{dfn}A \emph{Galois site} $\Aa$ is a strict atomic site\footnote{An atomic site is a site whose covering sieves are the non-empty sieves. The axioms for a Grothendieck topology are satisfied if and only if for any morphism cospan $\alpha:A\to C,\,\beta:B\to C$ there exists a morphism span $\alpha':D\to A,\,\beta':D\to B$ such that $\alpha\alpha'=\beta\beta'$. We call an atomic site \emph{strict} if all morphisms of the site are strict epimorphisms. This amounts to the condition that the Yoneda embedding of the site into its category of sheaves is full and faithful, cf. \cite[7(3)]{BD1}}in which for each object $X$ there exists an object $A_X$ such that the morphism-set $\Aa(A_X,X)$ is finite and the canonical span $A_X\leftarrow A_X^{\oplus\Aa(A_X,X)}\rightarrow X$ realises the product $A_X\times X$ in $\Aa^\oplus$.\end{dfn}

\begin{prp}\label{Galoissite}A connected Grothendieck topos is finitely generated if and only if it is equivalent to the category of sheaves on a Galois site with terminal object. The Galois site may be chosen such that its finite sum completion is a pretopos.\end{prp}

\begin{proof}We assume first that $\Ee$ is a connected, finitely generated Grothendieck topos. By Theorem \ref{main1} $\Ee$ is atomic so that by \cite[Theorem 1]{BD1} the full subcategory $\Aa$ of atoms of $\Ee$ is a strict atomic site inducing an equivalence $\Ee\simeq\Sh(\Aa)$. Since $\Ee$ is connected, $\Aa$ has a terminal object. Since all atoms are locally finite and connected we have a full inclusion $\Aa\hookrightarrow\Ee_f$. The induced functor $\Aa^\oplus\to\Ee_f$ admits a quasi-inverse taking an object of $\Ee_f$ to the finite family of its connected components. The quasi-inverse is functorial by extensivity of $\Ee$. So $\Aa^\oplus$ may be identified with $\Ee_f$ which is a pretopos according to Proposition \ref{pretopos}.

By Proposition \ref{finiteGalois}, each atom $A$ admits a finite Galois covering $A_X$. The cardinality of the morphism-set $\Aa(A_X,X)$ equals the cardinality of the quotient $\Aut(A_X)/\Aut_X(A_X)$ which is finite because $\Aut(A_X)$ is a subgroup of a finite permutation group. Since $A_X$ splits $X$, the product $A_X\times X$ is isomorphic to $A_X\times\gamma^*(\{1,\dots,n\})$ which in turn is isomorphic to a sum of $n$ copies of $A_X$. Under this identification, the projection to $A_X$ is the iterated codiagonal $A_X^{\oplus n}\to A_X$ while the projection to $X$ is the morphism $A_X^{\oplus n}\to X$ whose summands are given by the individual morphisms $A_X\to X$. So $\Aa$ is a Galois site with terminal object.

Conversely, assume that $\Aa$ is a Galois site with terminal object. Since $\Aa$ generates $\Sh(\Aa)$ it remains to be shown that the objects $X$ of $\Aa$ are finite objects of $\Sh(\Aa)$. Since they are connected, it is enough to show that they are locally finite. By hypothesis we have isomorphisms $A_X\times X\cong A_X^{\oplus\Aa(A_X,X)}\cong A_X\times\gamma^*(\Aa(A_X,X))$ for a finite morphism-set $\Aa(A_X,X)$. All objects $X$ of $\Aa$ are thus locally finite.\end{proof}

\begin{dfn}[cf. \cite{SGA1}]A \emph{Galois category} $(\Cc,F_\Cc)$ consists of a pretopos $\Cc$ with complemented subobjects and an exact conservative fibre functor $F_\Cc:\Cc\to\Ss_f$.\end{dfn}

\begin{prp}\label{exact}The embedded pretopos $\Ee_f$ of a connected, finitely generated Grothendieck topos $\Ee$ is a Galois category for a fibre functor $F_{\Ee_f}$ assigning to a each atom $\,X$ the set $\gamma_!(A_X\times X)$ of connected components of $A_X\times X$ where $A_X\to X$ is a fixed Galois covering of $X$.\end{prp}

\begin{proof}Since $\Ee_f$ is essentially small (cf. the proof of Theorem \ref{main1}) we can choose once and for all a Galois covering $A_X$ for each atom $X$ of $\Ee$. The assignment $X\mapsto \gamma_!(A_X\times X)$ defines a functor $\Ee_{at}\to\Ss_f$ because any two liftings $\bar{f}:A_X\to A_Y$ of a morphism of atoms $f:X\to Y$ differ only by an automorphism in $\Aut_Y(A_Y)$ (cf. the proof of Lemma \ref{Galois0}) so that the induced map $\gamma_!(\bar{f}\times f):\gamma_!(A_X\times X)\to\gamma_!(A_Y\times Y)$ does not depend on the choice of the lifting $\bar{f}$.

Like in the proof of Proposition \ref{Galoissite} we can apply finite sum completion to define a functor $F_{\Ee_f}:\Ee_f=\Ee_{at}^\oplus\to\Ss_f^\oplus\to\Ss_f$. It remains to be shown that $F_{\Ee_f}$ is exact and conservative. It is actually enough to show that $F_{\Ee_f}$ is exact and conservative when restricted to the full subcategory $\Spl(A)_f$ of $\Ee_f$ split by a finite Galois object $A$, because any finite set of objects of $\Ee_f$ is contained in such a subcategory.

We shall show that the functor $F_{\Ee_f}$ when restricted to $\Spl(A)_f$ is isomorphic to the composite functor $\Spl(A)_f\simeq\BB\Aut(A)_f\to\Ss_f$ where the first is an equivalence and the second is the exact and conservative forgetful functor, cf. Section \ref{Galoissection}.

Since $F_{\Ee_f}$ preserves finite sums, we can restrict to atoms in $\Spl(A)_f$. Lemma \ref{Galois0} and Proposition \ref{Galoissite} imply isomorphisms $\gamma_!(A_X\times X)\cong\Aut(A_X)/\Aut_X(A_X)$ natural in atoms $X$. By the universal property of the covering $A_X\onto X$, any epimorphism $A\onto X$ factors through $A\onto A_X$, uniquely determined up to an element in $\Aut_X(A_X)$. This induces an isomorphism $\gamma_!(A\times X)\to\gamma_!(A_X\times X)$ not depending on the choice of the epimorphism $A\onto A_X$. We get natural isomorphisms $\gamma_!(A\times X)\cong\Aut(A)/\Aut_X(A)$ which, via the equivalence $\Spl(A)_f\simeq\BB\Aut(A)_f$, induce the forgetful functor $\BB\Aut(A)_f\to\Ss_f$, cf. Section \ref{Galoissection}.\end{proof}

\begin{rmk}Proposition \ref{exact} is closely related to the Galois theory developed by Barr in \cite{Barr1, Barr2}. Our Galois coverings are dual to Barr's normal envelopes. Barr constructs in \cite{Barr2} a functor $M$ which is similar to our fibre functor $F_{\Ee_f}$. He defines $M$ by choosing varying envelopes in order to validate functoriality, exactness and conservativity of $M$. This leads to delicate coherence problems which we circumvent by Galois-theoretical methods after fixing a Galois covering for each atom of $\Ee$.\end{rmk}

\begin{dfn}\label{profinitegroup}A \emph{profinite group} is a group $\,G$ endowed with a filter of subgroups $(K_\alpha)_{\alpha\in\Ff}$ such that
\begin{itemize}\item[(a)]each subgroup $K_\alpha$ has finite index in $\,G$;\item[(b)]the filter is stable under conjugation in $\,G$;\item[(c)]the intersection $\bigcap_{\alpha\in\Ff}K_\alpha$ is trivial.\end{itemize}\end{dfn}

We endow $G$ with the unique topology turning $G$ into a topological group so that $(K_\alpha)_{\alpha\in\Ff}$ coincides with the filter of open subgroups of $G$. The orbit under conjugation of every $K_\alpha$ is finite because the normaliser of $K_\alpha$ has finite index. The finite intersection of the conjugates of $K_\alpha$ is an open normal subgroup $N_\alpha$ of $G$, contained in $K_\alpha$. The $N_\alpha$ form therefore a neighbourhood basis of the neutral element of $G$ so that $G$ may be written as a cofiltered limit of finite groups $G/N_\alpha$. By (c) the topology of $G$ satisfies Hausdorff's separation axiom so that $G$ is a compact zero-dimensional space, a so-called Stone space. Every group object in the category of Stone spaces arises in this way from its filter of open subgroups.

The \emph{classifying topos} $\BB G$ of a profinite group $G$ is the category of discrete sets with \emph{continuous} $G$-action or, what amounts to the same, the category of those $G$-sets whose isotropy groups belong to the given filter of open subgroups of $G$.

\begin{thm}\label{main2}Each connected, finitely generated Grothendieck topos $\Ee$ admits a Galois point $x_\Ee$ whose automorphism group $\Aut(x_\Ee)$ carries a unique profinite topology with the property that $\Ee$ is equivalent to the classifying topos $\BB\Aut(x_\Ee)$.\end{thm}

\begin{proof}By Proposition \ref{exact} there is an exact conservative functor $\Ee_f\to\Ss_f$ inducing a functor $\Ee=\Ee_{sf}\to\Ss_{sf}=\Ss$. Since small sums can be written as filtered colimits of finite sums, and the latter are exact in $\Ee$ and in $\Ss$, we get an exact conservative functor $\Ee\to\Ss$ preserving all small colimits. It is thus the inverse image functor of a surjective geometric morphism $x_\Ee:\Ss\to\Ee$.

The fibre $x_\Ee^*(X)$ at an atom $X$ is in bijection with the morphism-set $\Ee(A_X,X)$ where $A_X\onto X$ is a Galois covering. The automorphism group $\Aut(A_X)$ acts transitively on $\Ee(A_X,X)$, cf. Section \ref{Galoissection}. The morphism-set $\Ee(A_X,X)$ can thus be viewed as an atom of the classifying topos $\BB\Aut(A_X)$, and the composite geometric morphism $x^*_\Ee\circ (q_A)^*$ is isomorphic to the forgetful functor $(p_A)^*$. This holds for any atom of $\BB\Aut(A_X)$ and any finite Galois object $A_X$, i.e. $x_\Ee$ is a Galois point.

Let us describe the automorphism group $\Aut(x_\Ee)$. By Proposition \ref{Galoissite} $\Ee$ is equivalent to the topos of sheaves on the atomic Galois site $\Aa$ so that $\Aut(x_\Ee)$ is isomorphic to the automorphism group of the fibre functor restricted to $\Aa$.

The category of elements of the restricted fibre functor has as objects pairs $(X\in\Ob(\Aa),x\in x_\Ee^*(X))$, i.e. \emph{atomic neighbourhoods} of $x_\Ee$ (cf. \cite{SGA4}) and as morphisms $f:(X,x)\to(Y,y)$ those $f\in\Aa(X,Y)$ for which $x_\Ee^*(f)(x)=y$. A morphism of atomic neighbourhoods of $x_\Ee$ induces a commuting square$$\xymatrix{A_X\ar@{->>}[r]^{\bar{f}}\ar@{->>}[d]_x&A_Y\ar@{->>}[d]^y\\X\ar@{->>}[r]_f&Y}$$where $x:A_X\onto X$ and $y:A_Y\onto Y$ are induced by $x\in x_\Ee^*(X)$ and $y\in x_\Ee^*(Y)$ and the chosen Galois coverings of $X$ and $Y$. The lifting $\tilde{f}:A_X\to A_Y$ is determined up to an element in $\Aut_Y(A_Y)$, cf. Lemma \ref{Galois0}. We get a group epimorphism $\alpha_f:\Aut(A_X)\to\Aut(A_Y)$ such that $(\alpha_f)_!(x)=y$.

Since an automorphism of $x_\Ee$ corresponds to an invertible endofunctor $\alpha:\el(x_\Ee)\to\el(x_\Ee)$ over $\Aa$, i.e. to a compatible system of automorphisms $(\alpha_X\in\Aut_X(A_X))_{X\in\Ob(\Aa)}$, the indeterminacy of $\tilde{f}$ vanishes, and the automorphism group $\Aut(x_\Ee)$ may be identified with an actual limit of finite groups. This limit can be computed in the category of Stone spaces and carries thus a profinite topology such that $x_\Ee^*:\Ee=s\Aa\to s\Ss_f=\Ss$ takes values in \emph{continuous} $\Aut(x_\Ee)$-sets.

We now show that $x_\Ee^*:\Ee\to\Ss$ induces an equivalence $\Ee\simeq\BB\Aut(x_\Ee)$ by identifying the Galois site $\Aa$ with a \emph{site of definition} for $\BB\Aut(x_\Ee)$, cf. Mac Lane-Moerdijk \cite[Chapter III.9]{MM}. Such a site has as objects the transitive continuous $\Aut(x_\Ee)$-sets. These are up to isomorphism the transitive $\Aut(A_X)$-sets $M$ for finite Galois objects $A_X$ covering atoms $X$ of $\Ee$, cf. Section \ref{Galoissection}. The morphisms are given by equivariant maps $M\to N$ for some group epimorphism $\Aut(A_X)\to\Aut(A_Y)$. These are precisely the morphisms of the Galois site $\Aa$ by Lemma \ref{Galois0}.\end{proof}

\begin{rmk}\label{BhattScholze}Since for a profinite group $G$ the category of finite objects of the classifying topos $\BB G$ is the category of finite continuous $G$-sets, the equivalence $\Ee\simeq\BB\Aut(x_\Ee)$ recovers Grothendieck's Representation Theorem $\Ee_f\simeq\BB\Aut(x_\Ee)_f$ for the Galois category $(\Ee_f,F_{\Ee_f})$, cf. Proposition \ref{exact} and \cite[Theorem 4.1]{SGA1}.

It is well-known that a connected, atomic Grothendieck topos is equivalent to the classifying topos of the \emph{localic automorphism group} of any of its points, cf. \cite{JT, M, J, D, B}. Neither the points nor the associated localic automorphism groups need to be isomorphic. The distinguishing feature of Theorem \ref{main2} is the explicit construction, for any connected, finitely generated Grothendieck topos $\Ee$, of a Galois point with profinite automorphism group. The construction of this Galois point involves choices but any two Galois points are isomorphic by Proposition \ref{Galoispoint}.

Recently, there has been renewed interest in Grothendieck toposes $\Ee$ equivalent to $\BB G$ for a topological automorphism group of some surjective point, cf. Bhatt-Scholze \cite[Theorem 7.2.5]{BS} and Caramello \cite[Theorem 3.5]{C}. The occurring automorphism groups are \emph{not} assumed prodiscrete so that this \emph{topological} Galois theory goes beyond the classical context of Galois toposes. The paradigmatic example is the Myhill-Schanuel topos $\BB\Sgm_\infty$ of \emph{nominal sets} where the infinite symmetric group $\Sgm_\infty$ is endowed with the topology induced from the product topology on $\NN^\NN$. This topological group is \emph{coherent} in the sense of Johnstone \cite[Example D.3.4.1]{J} so that $\BB\Sgm_\infty$ is a coherent, atomic Grothendieck topos which is not finitely generated. There is a Representation Theorem for this kind of toposes, due to Makkai \cite{Ma0}.

In general, any point of a connected, atomic Grothendieck topos $\Ee$ is surjective so that the Representation Theorem of Butz-Moerdijk \cite{BM} yields the existence of a topological group $G$ such that $\Ee\simeq\BB G$. Up to Morita equivalence, the topology of $G$ can be replaced with the (possibly coarser) filter-topology induced by the open subgroups of $G$, and $G$ with this filter-topology becomes a \emph{complete topological group} with respect to its \emph{two-sided uniformity}, cf. \cite[Proposition 7.1.5]{BS}.

Interestingly, these complete topological groups have the property that their frames of opens are cogroups in frames so that they induce localic groups which are hypercomplete in Isbell's sense \cite{Isbell}, cf. K\v{r}\'{i}\v{z} \cite{Kriz} and Banaschewski-Vermeulen \cite{BV}.\end{rmk}

\begin{exm}\label{Speck}\emph{The \'etale topos of a field $k$.} For any field $k$, the opposite of the category of finite separable field extensions of $k$ is a \emph{Galois site} $\Aa_k$ in the sense of Definition \ref{Galoissite}. Indeed, by the \emph{Primitive Element Theorem}, any finite separable extension $k\hookrightarrow K$ is of the form $k\hookrightarrow k[x]$ for an element $x\in K$. A \emph{splitting field} $K'$ for the minimal monic polynomial $f\in k[X]$ defining $x\in K$ has then the property that $k[x]\otimes_kK'\cong K'[x]\cong K'^{\deg(f)}$ which is dual to the defining property of a Galois site. The finite sum completion $\Aa_k^\oplus$ is equivalent to the opposite of the category of \emph{finitely generated} separable $k$-algebras. The topos $\Et(k)$ of sheaves on $\Aa_k^\oplus$ is usually called the \emph{\'etale topos} of $\Spec(k)$. According to Theorem \ref{main2} this topos is equivalent to the classifyng topos $\BB\Aut(x_{\Et(k)})$ of the profinite automorphism group of a Galois point $x_{\Et(k)}$. This profinite group may be identified with the \emph{absolute Galois group} $\Gal(\bar{k}/k)$ of $k$, since the fibre functor $x^*_{\Et(k)}$ at $\Spec(K)$ is isomorphic to the set of field embeddings $K\hookrightarrow \bar{k}$ into a separable closure $\bar{k}$ of $k$. The resulting profinite topology on $\Gal(\bar{k}/k)$ is known as the \emph{Krull topology}.\end{exm}

\section{Profinite fundamental group}

Combining Theorems \ref{main1} and \ref{main2} we obtain for every connected Grothendieck topos $\Ee$ a factorisation of the global section functor into $\Ee\to\Ee_{sf}\to\Ss$ where the intermediate topos $\Ee_{sf}$ is finitely generated and thus admits a Galois point.

We say that $\Ee$ is \emph{Galois-pointed} if $\Ee$ comes equipped with a point $x:\Ss\to\Ee$ such that the quotient $\bar{x}:\Ss\to\Ee\to\Ee_{sf}$ is a Galois point of $\Ee_{sf}$. This amounts to the property that the corresponding fibre functor $x^*:\Ee\to\Ss$ when restricted to finite objects can be computed by Galois-theoretic means like in Theorem \ref{main2}. We define the \emph{profinite fundamental group} $\hpi_1(\Ee,x)$ of a Galois-pointed connected Grothendieck topos $(\Ee,x:\Ss\to\Ee)$ to be the profinite automorphism group $\Aut(\bar{x}^*)$ of the restricted fibre functor $\bar{x}^*=x^*|_{\Ee_{sf}}$.

The aim of this section is to explore functoriality properties of this profinite fundamental group construction. In order to establish mere functoriality of $\hpi_1$ we must first show that a pointed geometric morphism $\phi:(\Ee,x)\to(\Ff,y)$ induces a pointed geometric morphism $\phi_{sf}:(\Ee_{sf},\bar{x})\to(\Ff_{sf},\bar{y})$ between the finitely generated quotient toposes. For this it is enough to show that the inverse image functor $\phi^*:\Ff\to\Ee$ takes $\Ff_{sf}$ to $\Ee_{sf}$. Since inverse image functors preserve locally finite objects and arbitrary sums, this follows from the following lemma.

\begin{lma}\label{locallyfinite}In any pointed connected Grothendieck topos, locally finite objects are decomposition-finite.\end{lma}

\begin{proof}Let $x:\Ss\to\Ee$ be a point and $X$ be a locally finite object of $\Ee$. Since $\Ee$ is connected either $X$ is initial or $X$ is globally supported. In the latter case, either $X$ is connected and hence decomposition-finite, or $X$ is a sum $X_1+X_2$ of two non-initial locally finite objects of $\Ee$ by Lemma \ref{complemented3}. Since the fibre functor $x^*$ preserves locally finite objects, binary sums and globally supported objects, the finite non-empty set $x^*(X)$ breaks into two finite non-empty sets $x^*(X_1)+x^*(X_2)$. An induction on the cardinality of $x^*(X)$ yields decomposition-finiteness of $X$.\end{proof}

A geometric morphism $\phi:\Ee\to\Ff$ is said to be \emph{(finite-)\'etale} if $\phi$ is equivalent over $\Ff$ to a geometric morphism $\Ff/F\to\Ff$ for a (finite) object $F$ of $\Ff$.

Recall that any \emph{locally connected} geometric morphism factorises as a \emph{connected}, locally connected geometric morphism $\Ee\to\Ff/\phi_!(1_\Ee)$ followed by an \emph{\'etale} geometric morphism $\Ff/\phi_!(1_\Ee)\to\Ff$, cf. Johnstone \cite{J1}.

\begin{prp}\label{proepimono}A pointed geometric morphism $\phi:(\Ee,x)\to(\Ff,y)$ between Galois-pointed connected Grothendieck toposes induces a homomorphism of profinite fundamental groups $\hpi_1(\phi):\hpi_1(\Ee,x)\to\hpi_1(\Ff,y)$ which is surjective (resp. injective) whenever $\phi$ is connected (resp. finite-\'etale).

In particular, if $\phi$ is locally connected of finite type, then its connected/\'etale factorisation induces the epi/mono factorisation on profinite fundamental groups.
\end{prp}

\begin{proof}By Lemma \ref{locallyfinite} we have a commutative diagram of geometric morphisms$$\xymatrix{\Ss\ar[r]^{x}\ar[rd]_{\bar{x}}&\Ee\ar[r]^\phi\ar[d]&\Ff\ar[d]\\&\Ee_{sf}\ar[r]_{\phi_{sf}}&\Ff_{sf}}$$such that $\phi_{sf}$ takes the Galois point $\bar{x}$ to the Galois point $\bar{y}$ and $\phi^*:\Ff\to\Ee$ restricts to $\phi_{sf}^*:\Ff_{sf}\to\Ee_{sf}$. The group homomorphism $\hpi_1(\phi):\hpi_1(\Ee,x)\to\hpi_1(\Ff,y)$ may be computed by precomposing the elements of $\Aut(\bar{x}^*)$ with $\phi_{sf}^*$.

If $\phi:\Ee\to\Ff$ is connected, then $\phi_{sf}:\Ee_{sf}\to\Ff_{sf}$ is also connected because the vertical geometric morphisms of the diagram above are connected. Therefore $\phi_{sf}^*$ is (up to an invertible $2$-cell) a section, so that precomposing with $\phi_{sf}^*$ is surjective.

If $\phi:\Ee\to\Ff$ is finite-\'etale with respect to an object $F$ of $\Ff_f$, then $\phi_{sf}:\Ee_{sf}\to\Ff_{sf}$ is also \'etale with respect to $F$. The group $\hpi_1(\Ee,x)=\Aut(\bar{x}^*)$ may then be identified with the subgroup of $\hpi_1(\Ff,y)=\Aut(\bar{y}^*)$ consisting of those automorphisms which fix $F$. We use here that liftings of $\bar{y}:\Ss\to\Ff_{sf}$ along $\Ff_{sf}/F\to\Ff_{sf}$ correspond functorially to elements of $\bar{y}^*(F)$, cf. Grothendieck-Verdier \cite[Proposition IV.5.12]{SGA4}.

For the last assertion it suffices now to observe that composing a Galois point $\bar{x}:\Ss\to\Ee_{sf}$ with a connected, locally connected morphism $\phi_{sf}:\Ee_{sf}\to\Ee'_{sf}$ yields a Galois point of $\Ee'_{sf}$ because the functor $\phi_{sf}^*$ is fully faithful and preserves connected objects (and hence Galois objects) by Lemma \ref{locallyconnected2}.
\end{proof}

We shall now compare the fundamental group $\pi_1(E,x)$ of a based topological space $E$ with the profinite fundamental group $\hpi_1(\Ee,x)$ of the topos $\Ee=\Sh(E)$ of set-valued sheaves on $E$, where the geometric point $x:\Ss=\Sh(\star)\to\Ee=\Sh(E)$ is induced by the corresponding base point $x:\star\to E$. This is more intricate than one might expect because Grothendieck's Galois-theoretic approach to fundamental groups has no simple-minded analog in the setting of topological spaces, points of localic toposes admitting no non-trivial automorphisms.

What replaces Galois points of a finitely generated Grothendieck topos are the topologist's \emph{universal covering spaces}, and any comparison unavoidably involves a topos-theoretic understanding of universal covering spaces. This is the route we are choosing, closely following ideas going back to Chevalley \cite[Chapter II]{Ch}. The reader may consult Barr-Diaconescu \cite[Examples]{BD2}, Bunge-Moerdijk \cite[Sections 1-2]{BMoe} and Borceux-Janelidze \cite[Chapter 6]{BJ} for more background on the matter.

\begin{dfn}A geometric morphism $\phi:\Ee\to\Ff$ between pointed, connected Grothendieck toposes is said to be

\begin{itemize}\item\emph{Galois-connected} if a locally constant object $\,Y$ of $\,\Ff$ is constant as soon as its inverse image $\phi^*(Y)$ is constant in $\,\Ee$;\item \emph{Galois-\'etale} if it is \'etale with respect to a Galois object $\,U$ of $\,\Ff$.\end{itemize}\end{dfn}

Any Galois object is globally supported and split by itself and hence in particular a locally constant object. Therefore, any Galois-\'etale morphism is a \emph{covering projection}. The group of deck transformation of this covering projection may be identified with the automorphism group of the corresponding Galois object $U$.

The following terminology is inspired by the treatment of \emph{comprehensive factorisation systems} of the first author and Kaufmann in \cite[Section 3]{BK}. It would be interesting to know which geometric morphisms allow for a comprehensive factorisation in the sense of Definition \ref{comprehensive0}. Caramello-Osmond \cite{CO} show that \emph{any} geometric morphism allows for a terminally-connected/pro\'etale factorisation which coincides with the connected/\'etale factorisation for \emph{locally connected} geometric morphisms.

In general one has strict implications connected$\implies$terminally-connected$\implies$Galois-connected as well as Galois-\'etale$\implies$\'etale$\implies$pro\'etale.


\begin{dfn}\label{comprehensive0}A \emph{point} $x:\Ss\to\Ee$ is called \emph{comprehensive} if it factors as a Galois-connected morphisms $\Ss\to\bar{\Ee}$ followed by a Galois-\'etale morphism $\bar{\Ee}\to\Ee$.

The \emph{Chevalley fundamental group} $\pi_1(\Ee,x)$ at a comprehensive point $x:\Ss\to\Ee$ is the group of deck transformations of such a Galois-\'etale morphism $\bar{\Ee}\to\Ee$.\end{dfn}

\begin{lma}\label{comprehensive}Let $x:\Ss\to\Ee$ be a point of a connected Grothendieck topos.

\begin{itemize}\item[(a)]Any two Galois-connected/Galois-\'etale factorisations of $x$ are equivalent and yield thus isomorphic Chevalley fundamental groups.\item[(b)]For a path-connected, semi-locally simply connected topological space $E$, any point $x:\Ss\to\Sh(E)$ is comprehensive, and the Chevalley fundamental group $\pi_1(\Sh(E),x)$ is isomorphic to the usual fundamental group $\pi_1(E,x)$.\item[(c)]If $x$ is comprehensive then the induced point $\bar{x}:\Ss\to\Ee_{sf}$ is a Galois point.\end{itemize}\end{lma}

\begin{proof}(a) The following commutative square of undotted arrows$$\xymatrix{\Ss\ar[r]^{\iota_1}\ar[d]_{\iota_2}&\bar{\Ee}_1\ar[d]^{\rho_1}\\\bar{\Ee}_2\ar[r]_{\rho_2}\ar@{.>}[ru]&\Ee}$$is defined by two Galois-connected/Galois-\'etale factorisations of a comprehensive point $x:\Ss\to\Ee$. Since in $\Ss$ all objects are constant, in $\bar{\Ee_1}$ and in $\bar{\Ee}_2$ all locally constant objects are constant. This is the topos-theoretic formulation of simple connectedness of $\bar{\Ee}_1$ and of $\bar{\Ee}_2$.

Let $U_1$ be a Galois object of $\Ee_1$ such that $\bar{\Ee_1}\simeq\Ee/U_1$. Then $\rho_2^*(U_1)$ is constant in $\bar{\Ee_2}$ so that the morphism $\star_\Ss\to\iota_2^*\rho_2^*(U_1)$ inducing the commuting square above (cf. \cite[Proposition IV.5.12]{SGA4}) is the image under $\iota_2^*$ of a morphism $\star_{\bar{\Ee_2}}\to\rho_2^*(U_1)$. The latter induces a diagonal filler $\bar{\Ee}_2\to\bar{\Ee}_1$ rendering the lower triangle commutative and the upper triangle pseudo-commutative by conservativity of $\rho_1^*$.

Similarily, choosing a Galois object $U_2$ such that $\bar{\Ee_2}\simeq\Ee/U_2$, there results a diagonal filler $\bar{\Ee}_1\to\bar{\Ee}_2$ quasi-inverse to the former diagonal filler. It follows that the Chevalley fundamental groups of the two factorisations are isomorphic.

(b) The hypotheses on the topological space $E$ amount to the existence of a universal covering space $\bar{E}(x)\to E$ for each base point $x\in E$ inducing a factorisation of $x:\Ss\to\Sh(E)$ into a Galois-connected morphism $\Ss\to\Sh(\bar{E}(x))$ followed by a Galois-\'etale morphism $\Sh(\bar{E}(x))\to\Sh(E)$. The Chevalley fundamental group of the latter is isomorphic to the group of deck transformations of the universal covering $\bar{E}(x)\to E$ which in turn is canonically isomorphic to $\pi_1(E,x)$.

(c) Let $U$ be the Galois object corresponding to the comprehensive point $x:\Ss\to\Ee$ and let $A$ be any finite Galois object of $\Ee$. We have to show that in the following commutative diagram of geometric morphisms$$\xymatrix{\Ss\ar@/^/[rrrd]_{x_A}\ar@/^3ex/[rr]^x\ar[r]^{\iota}\ar[rd]_{\bar{\iota}}&\Ee/U\ar[r]^\rho\ar[d]&\Ee\ar[d]\ar[rd]&\\
&(\Ee/U)_{sf}\ar[r]_{\rho_{sf}}&\Ee_{sf}\ar[r]_{q_A}&\BB\Aut(A)}$$
the point $x_A:\Ss\to\BB\Aut(A)$ is ``canonical'', i.e. an essential surjection, cf. Section \ref{Galoissection}. Indeed, since $\rho:\Ee/U\to\Ee$ is an essential surjection, the induced morphism $\rho_{sf}:(\Ee/U)_{sf}\to\Ee_{sf}$ is also an essential surjection. Since $\Ee/U$ is simply connected (every locally constant object is constant), the topos $(\Ee/U)_{sf}$ contains only constant objects so that $\bar{\iota}:\Ss\to(\Ee/U)_{sf}$ is an equivalence. Therefore, $x_A$ is the composite of three essential surjections and hence itself an essential surjection.\end{proof}

\begin{prp}\label{profinitecompletion}For any comprehensive point $x:\Ss\to\Ee$ of a connected Grothen-dieck topos, the profinite fundamental group $\hpi_1(\Ee,x)$ is isomorphic to the profinite completion $\widehat{\pi_1(\Ee,x)}$ of the Chevalley fundamental group $\pi_1(\Ee,x)$.\end{prp}

\begin{proof}Let $U$ be a Galois object corresponding to the comprehensive point $x$. The Chevalley fundamental group $\pi_1(\Ee,x)$ equals then the automorphism group $\Aut_\Ee(U)$. Finite quotients of $U$ correspond to finite Galois-objects $A$ which in turn correspond to finite topos quotients $q_A:\Ee_{sf}\to\BB\Aut(A)$. According to Lemma \ref{comprehensive}c, the quotient points are canonical so that the profinite completion of the group $\Aut_\Ee(U)$ is computed in the same way as the profinite automorphism group of the Galois point $\bar{x}:\Ss\to\Ee_{sf}$, cf. the proof of Theorem \ref{main2}.\end{proof}

\begin{exm}The fundamental group of the topological circle $S^1$ is $\ZZ$. It is isomorphic to the group of deck transformations of the universal covering $\RR\to S^1:t\mapsto e^{it}$. Therefore, the profinite fundamental group of $\Sh(S^1)$ is the profinite completion of $\ZZ$, i.e. a product of rings of $p$-adic integers where $p$ runs through all prime numbers.\end{exm}

\begin{rmk}The profinite fundamental group $\hpi_1(\Sh(E),x)$ can thus be viewed as a profinite generalisation of the classical fundamental group $\pi_1(E,x)$, and as such it is an algebraic invariant for all connected (not only the path-connected) topological spaces. The price to pay for this generalisation is that the passage from a group to its profinite completion loses information.

However, the motivation of Grothendieck \cite{SGA1} to introduce profinite fundamental groups comes from algebraic geometry where the underlying topology of a \emph{scheme} is not suited for comprehensive points, yet there is a well defined \emph{\'etale topos} for any scheme. The latter is finitely generated and thus admits a Galois point as well as a profinite fundamental group containing valuable information about the scheme.\end{rmk}

We end this article by constructing a topos-theoretical analog of Grothendieck's short exact sequence of profinite fundamental groups, cf. \cite[Theorem IX.6.1]{SGA1}.


\begin{thm}\label{fundamental}Let $\phi:(\Ee,x)\to(\Ff,y)$ be a pointed, connected geometric morphism between Galois-pointed, connected Grothendieck toposes.

If $\,\Ff$ is finitely generated then there exists a locally finite-\'etale morphism $\rho:\bar{\Ff}\to\Ff$ with simply connected domain $\bar{\Ff}$, and a pseudo-pullback square of toposes$$\xymatrix{\Ee\times_\Ff\bar{\Ff}\ar[d]\ar[r]&\bar{\Ff}\ar[d]^\rho\\\Ee\ar[r]_\phi&\Ff}$$inducing a short exact sequence of profinite fundamental groups$$\xymatrix{1\ar[r]&\hpi_1(\Ee\times_\Ff\bar{\Ff},z)\ar[r]&\hpi_1(\Ee,x)\ar[r]&\hpi_1(\Ff,y)\ar[r]&1}$$\end{thm}

\begin{proof}Since $\Ff$ is finitely generated, it may be identified with the topos of sheaves on a Galois site $\Aa$, cf. Proposition \ref{Galoissite}. The restricted fibre functor $\bar{y}^*:\Aa\to\Ss_f$ takes values in finite sets and surjections of finite sets. The category of elements $\bar{\Aa}=\el(\bar{y}^*)$ is then a strict atomic site because the projection $\bar{\Aa}\to\Aa$ is faithful. Since the projection is \emph{cover-reflecting} we get by \cite[Proposition C.2.3.18]{J} a geometric morphism $\rho:\bar{\Ff}=\Sh(\bar{\Aa})\to\Ff=\Sh(\Aa)$ whose restriction to each atom $A$ induces a finite-\'etale morphism $\bar{\Ff}/\rho^*(A)\to\Ff/A$. The inverse images $\rho^*(A)$ are finite objects of $\bar{\Ff}$ and generate $\bar{\Ff}$ so that $\bar{\Ff}$ is finitely generated. By construction, the fibre functor for $\bar{\Aa}$ takes atoms to singleton sets. It follows that the profinite group classifying $\bar{\Ff}$ must be trivial, i.e. $\bar{\Ff}$ is simply connected.

So, the pseudo-pullback above exists and exhibits $\Ee\times_\Ff\bar{\Ff}\to\Ee$ as a locally finite-\'etale geometric morphism. Since $\phi$ is pointed, the Galois point $x:\Ss\to\Ee$ factors through a point $z:\Ss\to\Ee\times_\Ff\bar{\Ff}$. Even though it is unclear whether $z$ is a Galois-point, its automorphism group is a subgroup of the profinite automorphism group $\hpi_1(\Ee,x)$ and carries as such a canonical profinite topology induced by the fact that $\Ee\times_\Ff\bar{\Ff}\to\Ee$ is locally finite-\'etale. By Proposition \ref{proepimono}, we know that $\hpi_1(\phi):\hpi_1(\Ee,x)\to\hpi_1(\Ff,y)$ is an epimorphism. Every element of $\Aut(\bar{x}^*)=\hpi_1(\Ee,x)$ whose image under precomposition with $\phi^*$ is trivial in $\Aut(\bar{y}^*)=\hpi_1(\Ff,y)$ lifts consistently to the identity of the fibre functor of $\bar{\Ff}$, and comes therefore from an automorphism in the subgroup $\Aut(\bar{z}^*)=\hpi_1(\Ee\times_{\Ff}\bar{\Ff},z)$ of $\hpi_1(\Ee,x)$.\end{proof}

\begin{exm}\emph{The \'etale topos of a $k$-scheme.} For any scheme $X$ over a field $k$ Grothendieck and his school \cite{SGA4} define an \'etale topos $\Et(X)$ whose objects are \'etale maps of finite type with codomain $X$. For a punctual scheme $X=\Spec(k)$ this recovers the \'etale topos $\Et(k)$ discussed in Example \ref{Speck}. For a geometrically connected $k$-scheme $X$ the induced morphism $\Et(X)\to\Et(k)$ is connected. The fibre topos constructed in Theorem \ref{fundamental} is then easily identified with the \'etale topos $\Et(X\otimes_k\bar{k})$. The short exact sequence of \'etale fundamental groups becomes$$\xymatrix{1\ar[r]&\hpi_1(X\otimes_k\bar{k},z)\ar[r]&\hpi_1(X,x)\ar[r]&\Gal(\bar{k}/k)\ar[r]&1}$$ In the special case $k=\QQ$ and $X=\PP^1_\QQ-\{0,1,\infty\}$ one shows (using among others Proposition \ref{profinitecompletion}) that the \'etale fundamental group of $X\otimes_\QQ\bar{\QQ}$ is the profinite completion $\widehat{F}_2$ of the free group $F_2$ on two generators. The short exact sequence induces then an action (by outer automorphisms) of the absolute Galois group $\Gal(\bar{\QQ}/\QQ)$ on $\widehat{F}_2$. This action is closely related to ongoing research on profinite Grothendieck-Teichm\"uller groups and mapping class groups, cf. Schneps \cite{S}.\end{exm}

\vspace{5ex}

\noindent{\small\sc Universit\'e C\^ote d'Azur, Lab. J. A. Dieudonn\'e, Parc Valrose, 06108 Nice Cedex, France.}\hspace{2em}\emph{E-mails:}
clemens.berger and victor.iwaniack$@$univ-cotedazur.fr\vspace{1ex}

\end{document}